\documentclass{article}
\usepackage{graphicx}
\usepackage{xcolor}
\usepackage{kpfonts}
\usepackage{mathptmx} 
\usepackage{amsmath} \usepackage{amsfonts}
\usepackage[margin=1.5in]{geometry}
\usepackage{amsthm} 
\usepackage{enumitem} 
\theoremstyle{definition}
\newtheorem{definition}{Definition}

\newtheorem{theorem}{Theorem}
\newtheorem{lemma}{Lemma}
\newtheorem{remark}{Remark}
\newtheorem{corollary}{Corollary}
\numberwithin{equation}{section}
\numberwithin{theorem}{section}
\numberwithin{proposition}{section}
\numberwithin{lemma}{section}
\numberwithin{corollary}{section}
\numberwithin{remark}{section}
\usepackage{hyperref}

\providecommand{\msc}[1]{\small	\textbf{2020 Mathematics Subject Classification:} #1}
\providecommand{\keywords}[1]
{\small	\textbf{\textit{Keywords:}} #1}
\title{Weak convergence of the integral of semi-Markov processes}
\author{Andrea Pedicone$^1$ and Fabrizio Cinque$^2$\\\small Department of Statistical Sciences, Sapienza University of Rome, Italy \\\small $^1$andrea.pedicone@uniroma1.it $^2$fabrizio.cinque@uniroma1.it}
\date{}
\begin{document}
\maketitle
\begin{abstract}
We study the asymptotic properties, in the weak sense, of regenerative processes and Markov renewal processes. For the latter, we derive both renewal-type results, also concerning the related counting process, and ergodic-type ones, including the so-called phi-mixing property. This theoretical framework permits us to study the weak limit of the integral of a semi-Markov process, which can be interpret as the position of a particle moving with finite velocities taken for a random time according to the Markov renewal process underlying the semi-Markov one. Under mild conditions, we obtain the weak convergence to scaled Brownian motion. As a particular case, this result establishes the weak convergence of the classical generalized telegraph process.
\end{abstract}
\keywords{Weak convergence; Semi-Markov processes; Markov renewal processes; Regenerative processes; Mixing sequences; Ergodic Theory}
\\
\msc{Primary 60F17, 60K15; Secondary 60F05}
\section{Introduction}

Let $V = (V(t))_{t\geq 0}$ be a cadlag stochastic process on a state space $\mathcal{V}$. Let us define $X = (X(t))_{t \geq 0}$, the integral of $V$, as 
\begin{equation}\label{telegraph process}
X(t) = \int_0^tV(s)ds,\ \;\; t \geq 0.
\end{equation}
The process \eqref{telegraph process}, under different formulation of $V$, includes a wide class of stochastic motions which appear in the probabilistic and physical literature under several different names, such as telegraph-type processes, finite-velocity random motions \cite{BNO2001, CC2024, Dg2010, Dc2001, O1990, SZ2004}, continuous time or directionally reinforced random walks \cite{HS1998, MMW1996, MS2014} and run-and-tumble processes \cite{DbMS2021, MADlB2012}. Our interest concerns the asymptotic behavior of $X$ in the case where $V$ is semi-Markov process. Under this probabilistic structure we formalize a model for the motion of a particle moving with a semi-Markovian velocity process, meaning that it keeps a certain speed for a random time dependent on both the current velocity and the following one. Our formulation is broad enough to include all the cited processes as particular cases. 

Let $(\hat{V},S)$ the Markov renewal process related to $V$, equation \eqref{telegraph process}, can be expressed in the following equivalent form
\begin{equation}\label{telegraph process 2}
X(t) = \sum_{k=1}^{N(t)}\hat{V}_{k-1}(S_k-S_{k-1})+\hat{V}_{N(t)}(t-S_{N(t)})
\end{equation}
where $N$ is the counting process associated to $S$, \textit{i.e.} $N(t) = \max\{k\in \mathbb{N}_0:S_k\leq t\}$. 

The main result of this paper is the weak convergence of a suitable normalization of $X(\lambda t)$ to a scaled Brownian motion in the space of the continuous function $C[0,+\infty)$ endowed with the topology of the uniform metric. By defining the sequence of stochastic process $X_\lambda = (X_\lambda(t))_{t\geq 0}$ as $X_\lambda(t) = \lambda^{-1/2}(X(\lambda t)-\lambda \theta t)$, where $\theta = (\mathsf{E}_\pi[S_1])^{-1}\mathsf{E}_\pi[\hat{V}_0S_1]$ (with $\mathsf{E}_\pi$ stands for the expected value computed under the invariant measure $\pi$ of $\hat{V}$), we prove that
\begin{equation}
X_\lambda \Rightarrow \mu^{-1/2}\gamma W
\end{equation}
where $W$ is Brownian motion, $\mu = \mathsf{E}_\pi[S_1]$, and
\begin{align}
\gamma^2 &= \mathsf{E}_\pi[(\hat{V}_0-\theta)^2S_1^2]+2\sum_{k\geq 1}\mathsf{E}_\pi[(\hat{V}_0-\theta)(\hat{V}_k-\theta)S_1(S_{k+1}-S_k)].
\end{align}
Moreover, we will also show that the above formula can be expressed as
\begin{equation}
\gamma^2=\pi_{v_0}\mathsf{E}\biggl[\biggl(\sum_{k=1}^{\tau_1}(\hat{V}_{k-1}-\theta)S_1\biggr)^{2}\bigg|\hat{V}_0 = v_0\biggr].
\end{equation}
where $\tau_1$ is the first passage times over the state $v_0$ of the Markov chain $\hat{V}$, that is $\tau_1 = \inf\{k>1: \hat{V}_k = v_0\}$.

In order to fulfill our purpose 
we prove that the sequence $\{f(\hat{V}_{k-1},S_k-S_{k-1})\}_{k \geq 1}$, where $f$ is any measurable function, is $\varphi$-mixing with $\varphi_k = K\rho^{k-1}$, for some $K>0$, $\rho \in [0,1)$, and delayed regenerative with respect to the sequence of successive passage times through a fixed state of the Markov chain $\hat{V}$. See \cite{Sigman Wolff} for a review of regenerative processes and references therein for their applications. 
\\The regenerative property permits us to derive a weak version of a renewal-type theorem concerning the related counting process, that is, for some $T>0$
\begin{equation}
\lim_{n \to +\infty}\mathsf{P}\biggl\{\sup_{t \in [0,T]}\biggl|\frac{N(n t)}{n}-\frac{t\mathsf{E}[\tau_1|\hat{V}_0 = v_0]}{\mathsf{E}[\sum_{k=1}^{\tau_1}(S_k-S_{k-1})|\hat{V}_0 = v_0]}\biggr| \geq \epsilon \biggr\} = 0,\ \;\; \epsilon >0.
\end{equation}
Then, we prove an ergodic theorem for the sequence $\{f(\hat{V}_{k-1},S_k-S_{k-1})\}_{k \geq 1}$
\begin{equation}
\lim_{n\to +\infty}\frac{1}{n}\sum_{k=1}^nf(\hat{V}_{k-1},S_k-S_{k-1}) = \mathsf{E}_\pi[f(\hat{V}_0,S_1)]
\;\;a.s.
\end{equation}
where $f$ is any measurable function such that $\mathsf{E}_\pi[|f(\hat{V}_0,S_1)|]<+\infty$.

Our problem has been inspired by the theory of the telegraph process (see \cite{O1990}), which arises from \eqref{telegraph process} by setting $V(t) = V(0)(-1)^{N(t)}$, where $N = (N(t))_{t\geq 0}$ is a Poisson process of parameter $\lambda>0$ and $V(0)$ is uniform over $\{-c,c\}$, with $c>0$, and independent of $N$. We point out that the main focus of the research in this area is in finding the explicit distributions of telegraph-type processes (see \cite{BNO2001, C2022, Dg2010, LR2014}), their functionals and conditioned process \cite{C2023, CO2021, LR2014, O1990, PO2025}. By considering the equation that governs the absolutely continuous component of the law of the telegraph process
\begin{equation}\label{telegrapher equation}
\frac{\partial^2 u}{\partial t^2}+2\lambda \frac{\partial u}{\partial t} = c^2\frac{\partial^2u}{\partial x^2},
\end{equation}
which is known as telegraph equation, we see that that, for $\lambda,c\to+\infty$ such that $c^2/\lambda\to1$ (the so called Kac's limit conditions), \eqref{telegrapher equation} becomes the heat equation. Under this behavior the particle speed goes to infinity as well as the number of inversions, maintaining a specific ratio between the two limits. This consideration is the heuristic explanation for the result proved in \cite{EK1986} (page 471), where it is shown that the telegraph process converges weakly to Brownian motion in the space $C[0,+\infty)$. We also refer to \cite{O1990, LR2014} for the pointwise convergence, to \cite{C2022} for a conditional one and to \cite{PO2025} for the convergence of the telegraph meander. 
\\The weak convergence of telegraph-type processes has been studied in \cite{Gosh, HS1998}, where the authors proved an invariant principle and related limit theorems in the case where $V(t) = \hat{V}_{k-1}\mathsf{1}_{[S_{k-1},S_k)}(t)$, under the stronger hypothesis that $\{\hat{V}_k\}_{k\geq 0}$ is an irreducible stationary Markov chain with finite $d$-dimensional state space, independent of $\{S_k\}_{k\geq 0}$, which is a a renewal sequence. 

This paper is organized in the following manner. Section \ref{SezioneProprietaAsintoticheProcessiRigenerativi} contains some preliminary limit result about sequences indexed by integers random variables and some asymptotic results concerning the regenerative processes. Section \ref{SezioneProprietaAsintoticheMarkovRinnovo} provides a detailed study of the limit behavior of Markov renewal processes. Section \ref{sezioneConvergenzaDeboleIntegraleSemiMarkov} concerns the weak limit of the integral \eqref{telegraph process} under a semi-Markov process. Finally, in Section \ref{sezioneConvergenzaIntegralRinnovoAlternato} we study the limit of the integral \eqref{telegraph process} under an alternating renewal process, a particular case of a semi-Markov process. As an application, we establish the weak convergence of the classical generalized telegraph process. 
\section{Limit theorems for regenerative processes}\label{SezioneProprietaAsintoticheProcessiRigenerativi}
For a given a stochastic process $\eta = (\eta(t))_{t\geq0}$, suppose that there exists a random time $\tau$ such that the process $\big(\eta(t+\tau);t\geq 0\big)$ has the same distribution as $\eta$ and is independent of the past cycle $\big\{\big(\eta(t);0\leq t<\tau\big),\tau\big\}$. Then we say that time $\tau$ is a regeneration epoch and $\eta$ regenerates at time $\tau$. This means that the process restart as if it was time $t = 0$ again, and its future is independent of its past. But if such a $\tau$ exists, then, since things start over again as if new, there must be a second time $\tau' > \tau$ yielding an identically distributed second cycle $\big\{(\eta(\tau+t);0\leq t<\tau'-\tau\big),\tau'-\tau\big\}$ and so on. This probabilistic structure describes what is called  regenerative process. Below we give the formal definition.   
\begin{definition}\label{Regenerative 2}
A 
stochastic process $\eta = (\eta(t))_{t\geq 0}$, defined on a filtered probability space $(\Omega,\mathcal{G},(\mathcal{F}_t)_{t\geq 0},\mathsf{P})$, is said to be regenerative if there exists a sequence $\{\tau_m\}_{m\geq 0}$, $\tau_0 = 0$, of stopping times with respect to $(\mathcal{F}_t)_{t\geq 0}$ provided that $\{\tau_{m+1} - \tau_{m},(\eta(t+\tau_{m});\;\;t \in [0,\tau_{m+1} - \tau_{m}))\}_{m\geq 0}$ forms a sequence of i.i.d. random elements.
\\The process $\eta$ is delayed regenerative, if $
\{\tau_{m+1} - \tau_{m},(\eta(t+\tau_{m});\;\;t \in [0,\tau_{m+1} - \tau_{m}))\}_{m\geq 1}$ is a sequence of i.i.d. random elements independent of $(\tau_{1} ,\eta(t)_{t \in[0,\tau_{1})})$.
\end{definition}
According to \cite{Cinlar} page 298, another way to define a regenerative process is the following.
\begin{definition}\label{Regenerative}
A 
stochastic process $\eta = (\eta(t))_{t\geq 0}$, defined on a filtered probability space $(\Omega,\mathcal{G},(\mathcal{F}_t)_{t\geq 0},\mathsf{P})$ with state space $E$, is said to be regenerative provided that 
\begin{enumerate}[label=(\roman*)]
\item there exists a sequence $\{\tau_m\}_{m\geq 0}$, $\tau_0 = 0$, of stopping times with respect to $(\mathcal{F}_t)_{t\geq 0}$ that forms a renewal sequence
\item for any $d,m \in \mathbb{N}$, $0\leq t_1<...<t_d$ and any measurable and bounded function $g:E^d \to \mathbb{R}$
\begin{equation*}
\mathsf{E}[g(\eta(t_1+\tau_m),...,\eta(t_d+\tau_m))|\mathcal{F}_{\tau_m}] = \mathsf{E}[g(\eta(t_1),...,\eta(t_d))].
\end{equation*}
\end{enumerate}
The process $\eta$ is delayed regenerative, if 
\begin{enumerate}[label={(\roman*a)}]
\item there exists a sequence $\{\tau_m\}_{m\geq 0}$, $\tau_0 = 0$, of stopping times with respect to $(\mathcal{F}_t)_{t\geq 0}$ that forms a delayed renewal sequence
\item for any $d,m \in \mathbb{N}$, $0\leq t_1<...<t_d$ and any measurable and bounded function $g:E^d \to \mathbb{R}$
\begin{equation*}
\mathsf{E}[g(\eta(t_1+\tau_m),...,\eta(t_d+\tau_m))|\mathcal{F}_{\tau_m}] = \mathsf{E}[g(\eta(t_1+\tau_1),...,\eta(t_d+\tau_1))].
\end{equation*}
\end{enumerate}
\end{definition}
Loosely speaking, a stochastic process is regenerative if there is a renewal process such that the segments of the process between successive renewal times are i.i.d.. While, a stochastic process is delayed regenerative if the first cycle has a distribution that is different from that of subsequent cycles.

The i.i.d. structure underlying a regenerative process allows us to establish classical limit theorems for sequence of random variables. Before doing so, we state the following two lemmas (which we will frequently use in our work) about convergence of randomly indexed sequences of random variables. As it will be shown, a random sequence indexed by a random variable has the same limit as the corresponding sequence indexed by a deterministic sequence, provided that the random index is "asymptotically equivalent" to the deterministic index. The first lemma shows an invariance principle in $D[0,1]$ (the space of real càdlàg function endowed with the Skorohod topology) of a random walk with random number of summands. The proof follows the argument of Theorem 14.4 in \cite{Billingsley} and will be omitted.
\begin{lemma}\label{Fundamental lemma}
Let $\{\eta_k\}_{k\geq 1}$ be a sequence of random variables with $\mathsf{E}[\eta_k] = 0$, $\mathsf{E}[\eta_k^2] = \sigma^2 <+\infty$ for every $k \in \mathbb{N}$. Define the processes
\begin{align}
&X_n(t) = \frac{1}{\sqrt{n}\sigma}\sum_{k=1}^{\lfloor nt\rfloor}\eta_k,\ \;\; t \in [0,1]\\
&Y_n(t) = \frac{1}{\sqrt{a_n\theta} \sigma}\sum_{k=1}^{\nu_{nt}}\eta_k,\ \;\; t \in [0,1]
\end{align}
where $\theta$ is a positive constant, $\{a_n\}$ is a positive divergent sequence and $(\nu_{t})_{t \geq 0}$ is a family of integer random variables. If it holds that
\begin{equation}
\sup_{t \in [0,1]}\biggl|\frac{\nu_{nt}}{a_n}-\theta t\biggr| \Rightarrow 0,
\end{equation}
then $X_n \Rightarrow W$ on $D[0,1]$, implies $Y_n \Rightarrow W$ on $D[0,1]$.
\end{lemma}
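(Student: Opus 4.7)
The plan is to reduce $Y_n$ to a random time change of a process that already converges, so we can invoke the continuous mapping theorem in the spirit of Billingsley's Theorem 14.4. The key observation is that if we pick the integer $m_n=\lfloor a_n\theta\rfloor$, then since $\nu_{nt}\in\mathbb{N}_0$ we have $\lfloor m_n\cdot\nu_{nt}/m_n\rfloor=\nu_{nt}$, and therefore
\begin{equation*}
Y_n(t)\;=\;\sqrt{\frac{m_n}{a_n\theta}}\;X_{m_n}\!\bigl(s_n(t)\bigr),\qquad s_n(t):=\frac{\nu_{nt}}{m_n}.
\end{equation*}
The scalar prefactor converges to $1$ deterministically, so it is asymptotically negligible and can be absorbed via Slutsky's lemma at the end.

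Next I would show the two ingredients needed for the time change. First, since $a_n\to+\infty$ gives $m_n\to+\infty$, the assumption $X_n\Rightarrow W$ on $D[0,1]$ passes to the subsequence and yields $X_{m_n}\Rightarrow W$. Second, the hypothesis $\sup_{t\in[0,1]}|\nu_{nt}/a_n-\theta t|\Rightarrow 0$, combined with $m_n/(a_n\theta)\to 1$, gives
\begin{equation*}
\sup_{t\in[0,1]}\bigl|s_n(t)-t\bigr|\;\stackrel{\mathsf{P}}{\longrightarrow}\;0,
\end{equation*}
so $s_n\to e$ uniformly in probability, where $e$ denotes the identity on $[0,1]$. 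Because the limit of $s_n$ is deterministic, this yields joint weak convergence $(X_{m_n},s_n)\Rightarrow(W,e)$ in $D[0,1]\times D[0,1]$ by standard coupling arguments.

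The last step is to apply the continuous mapping theorem to the composition map $(x,\phi)\mapsto x\circ\phi$. Here I expect the main obstacle: on $D[0,1]$ the composition map is not continuous in general. The way around it is to exploit that $W$ has continuous sample paths almost surely; indeed, the composition map is continuous at every pair $(w,\phi)$ with $w\in C[0,1]$ and $\phi$ continuous non-decreasing. Concretely, I would invoke Skorohod's representation theorem to realize versions $\tilde X_{m_n}\to\tilde W$ uniformly on $[0,1]$ a.s.\ (uniformity follows from $\tilde W\in C[0,1]$), together with $s_n\to e$ uniformly, and then use the decomposition
\begin{equation*}
\tilde X_{m_n}(s_n(t))-\tilde W(t)=\bigl[\tilde X_{m_n}(s_n(t))-\tilde W(s_n(t))\bigr]+\bigl[\tilde W(s_n(t))-\tilde W(t)\bigr],
\end{equation*}
whose first term vanishes by uniform convergence and whose second term vanishes by uniform continuity of $\tilde W$ on $[0,1]$.

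Putting everything together, $X_{m_n}\circ s_n\Rightarrow W$ on $D[0,1]$, and multiplying by $\sqrt{m_n/(a_n\theta)}\to 1$ via Slutsky yields $Y_n\Rightarrow W$, as required. The only genuinely delicate point is the handling of the composition in the Skorohod topology, which is precisely why passing through the Skorohod representation (or equivalently Billingsley's direct uniform estimate) is the natural route.
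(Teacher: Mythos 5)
Your argument is correct and is essentially the argument of Billingsley's Theorem 14.4 (random change of time applied to the composition $X_{m_n}\circ s_n$, with continuity of composition at a continuous limit and a deterministic time-change limit), which is exactly the proof the paper invokes and omits. The only point worth tightening is that $s_n(t)=\nu_{nt}/m_n$ may exceed $1$ with positive probability, so one should either extend $X_{m_n}$ beyond $[0,1]$ or truncate the time change on the high-probability event $\{\sup_t \nu_{nt}/m_n\le 1\}$, as the paper itself does later in the proof of its main theorem.
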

\begin{lemma}\label{lemma 3}
Let $\{\eta_k\}_{k\geq 1}$ be a sequence of identically distributed random variables with $\mathsf{E}[|\eta_1|^p]<+\infty$ for some $p>0$. Let $(\nu_t)_{t\geq 0}$ be a family of integer random variables and suppose that there exist a positive divergent family $(a_t)_{t\geq0}$ and $\theta>0$ such that
\begin{equation}\label{nu asy equal to a 2}
\frac{\nu_t}{a_t} \Rightarrow \theta.
\end{equation}
Then,
\begin{equation}
\max_{k=1,...,\nu_t}\frac{\eta_k}{\sqrt[p]{a_t}} \Rightarrow 0.
\end{equation}
\end{lemma}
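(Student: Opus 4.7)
The plan is to reduce the weak convergence to a constant to convergence in probability and then to control the random maximum by a deterministic union bound. Since convergence in distribution to a constant is equivalent to convergence in probability, it suffices to show that for every $\epsilon>0$,
$$\mathsf{P}\Bigl\{\max_{k=1,\dots,\nu_t}|\eta_k|>\epsilon\,a_t^{1/p}\Bigr\}\longrightarrow 0.$$
Passing from $\eta_k$ to $|\eta_k|$ is harmless: one has $\max_k\eta_k\le \max_k|\eta_k|$, while the lower tail is trivial because $\{\max_k\eta_k<-\epsilon a_t^{1/p}\}\subseteq\{\eta_1<-\epsilon a_t^{1/p}\}$ and $a_t\to+\infty$.

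The first step is to fix an arbitrary $M>\theta$ and to separate the contribution of the random index. Writing
$$\mathsf{P}\Bigl\{\max_{k=1,\dots,\nu_t}|\eta_k|>\epsilon\,a_t^{1/p}\Bigr\}\le \mathsf{P}\{\nu_t>Ma_t\}+\mathsf{P}\Bigl\{\max_{k=1,\dots,\lfloor Ma_t\rfloor}|\eta_k|>\epsilon\,a_t^{1/p}\Bigr\},$$
the first summand tends to $0$ thanks to the hypothesis \eqref{nu asy equal to a 2}, since $M>\theta$. For the second summand, a union bound combined with the identical distribution of the $\eta_k$ yields
$$\mathsf{P}\Bigl\{\max_{k=1,\dots,\lfloor Ma_t\rfloor}|\eta_k|>\epsilon\,a_t^{1/p}\Bigr\}\le \lfloor Ma_t\rfloor\,\mathsf{P}\{|\eta_1|>\epsilon\,a_t^{1/p}\}.$$
Notice that independence of the $\eta_k$ is not required; only identical distribution is used.

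The last ingredient is the elementary fact that $\mathsf{E}[|\eta_1|^p]<+\infty$ forces $x^p\,\mathsf{P}\{|\eta_1|>x\}\to 0$ as $x\to+\infty$, which follows from the dominated convergence theorem applied to $|\eta_1|^p\,\mathbf{1}_{\{|\eta_1|>x\}}$. Substituting $x=\epsilon\,a_t^{1/p}$,
$$a_t\,\mathsf{P}\{|\eta_1|>\epsilon\,a_t^{1/p}\}=\epsilon^{-p}\bigl(\epsilon\,a_t^{1/p}\bigr)^{p}\,\mathsf{P}\{|\eta_1|>\epsilon\,a_t^{1/p}\}\longrightarrow 0,$$
since $a_t\to+\infty$. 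Combining the three estimates and letting $t\to+\infty$ concludes the proof.

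I do not foresee a substantial obstacle: the argument is the classical ``tail-sum vanishing'' estimate $nP\{|\eta|>x_n\}\to 0$, coupled with the stochastic domination of $\nu_t$ by a deterministic envelope. The only point that needs care is to avoid invoking any kind of second moment or independence argument, since the hypotheses give neither; this is precisely why a simple subadditivity bound, rather than a Kolmogorov-type inequality, is the right tool.
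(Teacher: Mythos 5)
Your proof is correct and follows essentially the same route as the paper's: replace the random index $\nu_t$ by a deterministic envelope of order $a_t$ using the hypothesis $\nu_t/a_t\Rightarrow\theta$, then apply a union bound over identically distributed terms together with the truncated-moment (Markov/dominated-convergence) tail estimate $a_t\,\mathsf{P}\{|\eta_1|>\epsilon a_t^{1/p}\}\le \epsilon^{-p}\,\mathsf{E}[|\eta_1|^p\mathsf{1}_{\{|\eta_1|>\epsilon a_t^{1/p}\}}]\to 0$. The only cosmetic difference is that the paper centers the index around $\theta^{-1}a_t$ with a $\delta$-window rather than using a one-sided cutoff $M a_t$ with $M>\theta$, which changes nothing of substance.
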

\begin{proof}
For every $\epsilon>0$ and $\delta>0$ we have that
\begin{align*}
\mathsf{P}\{\max_{k = 1,...,\nu_t}|\eta_k|>\epsilon \sqrt[p]{a_t}\} &\leq \mathsf{P}\{\max_{k = 1,...,\nu_t}|\eta_k|>\epsilon\sqrt[p]{a_t},|\nu_{t}-\theta^{-1}a_t|<\delta a_t\}+\mathsf{P}\{|\nu_{t}-\theta^{-1}a_t|\geq\delta a_t\} \\
&\leq \mathsf{P}\{\max_{k = 1,...,\lfloor a_t(\theta^{-1} +\delta) \rfloor}|\eta_k|>\epsilon\sqrt[p]{a_t}\}+\mathsf{P}\{|\nu_{t}-\theta^{-1}a_t|\geq\delta a_t\}\\
&\leq\frac{\lfloor a_t(\theta^{-1} +\delta) \rfloor}{\epsilon^p a_t}\mathsf{E}[|\eta_1|^p\mathsf{1}_{\{|\eta_1|\geq \epsilon\sqrt[p]{a_t}\}}]+\mathsf{P}\{|\nu_{t}-\theta^{-1}a_t|\geq\delta a_t\}.
\end{align*}   
Now, by applying the dominate convergence theorem, the first term of the previous inequality goes to zero, while the second one goes to zero by means of \eqref{nu asy equal to a 2}.
\end{proof}
The next statements concern the asymptotic behavior of regenerative sequences, showing a functional central limit theorem and a strong law of large numbers.
\begin{theorem}\label{Clt regenerative processes}
Let $\{\eta_k\}_{k\geq 1}$ be a delayed regenerative process with regeneration epochs $\{\tau_m\}_{m\geq 0}$, $\tau_0 = 0$, such that $\mathsf{E}[\tau_m^2] < +\infty$ for every $m \in \mathbb{N}$. If we define the random function
\begin{equation}
X_n(t) = \frac{1}{\sqrt{n}\sigma}\sum_{k=1}^{\lfloor nt\rfloor}(\eta_k-\mu),\ \;\; t \in [0,1]
\end{equation}
where
\begin{align}\label{mu sigma reg}
&\mu = \frac{\mathsf{E}[\sum_{k=\tau_1+1}^{\tau_2}\eta_k]}{\mathsf{E}[\tau_2-\tau_1]}, &\sigma^2 = \frac{\mathsf{E}[\bigl(\sum_{k=\tau_1+1}^{\tau_2}(\eta_k-\mu)\bigr)^2]}{\mathsf{E}[\tau_2-\tau_1]},
\end{align}
and, if we assume that for every $m \in \mathbb{N}$
\begin{equation}\label{assumption finite variance}
\mathsf{E}\biggl[\biggl(\sum_{k=\tau_{m-1}+1}^{\tau_{m}}|\eta_k-\mu|\biggr)^2\biggr] <+\infty,
\end{equation}
then, it holds that
\begin{equation}
X_n \Rightarrow W
\end{equation}
in the space $D[0,1]$.
\end{theorem}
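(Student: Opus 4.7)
The plan is to decompose the partial sum into complete regenerative cycles plus two boundary contributions, apply Donsker's theorem to the i.i.d.\ cycle sums (for indices $m\geq 2$), and then transfer from a deterministic index to the random index given by the renewal counting process, via Lemma \ref{Fundamental lemma}. Set
\[
\xi_m:=\sum_{k=\tau_{m-1}+1}^{\tau_m}(\eta_k-\mu),\qquad m\geq 1,
\]
so that by Definition \ref{Regenerative 2} the sequence $\{\xi_m\}_{m\geq 2}$ is i.i.d., with $\mathsf{E}[\xi_2]=0$ from the definition of $\mu$ in \eqref{mu sigma reg} and $\mathsf{E}[\xi_2^2]=\sigma^2\nu<+\infty$ from the definition of $\sigma^2$ in \eqref{mu sigma reg} together with \eqref{assumption finite variance}, where $\nu:=\mathsf{E}[\tau_2-\tau_1]$. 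Letting $N_n=\max\{m\geq 0:\tau_m\leq n\}$, one has
\[
\sum_{k=1}^{\lfloor nt\rfloor}(\eta_k-\mu)=\xi_1+\sum_{m=2}^{N_{\lfloor nt\rfloor}}\xi_m+R_n(t),\qquad R_n(t):=\sum_{k=\tau_{N_{\lfloor nt\rfloor}}+1}^{\lfloor nt\rfloor}(\eta_k-\mu),
\]
and I would treat these three pieces separately.

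For the middle, dominant, contribution, Donsker's theorem applied to the i.i.d.\ sequence $\{\xi_{m+1}\}_{m\geq 1}$ yields $\frac{1}{\sqrt n\,\sigma\sqrt\nu}\sum_{m=1}^{\lfloor nt\rfloor}\xi_{m+1}\Rightarrow W$ in $D[0,1]$. Applying Lemma \ref{Fundamental lemma} with $a_n=n$, $\theta=1/\nu$ and $\nu_{nt}=N_{\lfloor nt\rfloor}-1$ then gives $\frac{1}{\sqrt n\,\sigma}\sum_{m=2}^{N_{\lfloor nt\rfloor}}\xi_m\Rightarrow W$, since the normalization matches: $\sqrt{a_n\theta}\cdot\sigma\sqrt\nu=\sqrt n\,\sigma$. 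The required hypothesis of Lemma \ref{Fundamental lemma} is $\sup_{t\in[0,1]}|N_{\lfloor nt\rfloor}/n-t/\nu|\Rightarrow 0$, a functional law of large numbers for $N_n$ which I would establish from $\tau_m/m\to\nu$ a.s.\ (the SLLN applied to the i.i.d.\ increments $\tau_m-\tau_{m-1}$, $m\geq 2$) combined with Pólya's monotone-convergence theorem, exploiting the monotonicity of $t\mapsto N_{\lfloor nt\rfloor}/n$ and the continuity of the deterministic limit $t\mapsto t/\nu$.

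It remains to show that the two boundary pieces are negligible uniformly in $t$. The first cycle $\xi_1$ is a fixed random variable, finite a.s.\ by \eqref{assumption finite variance} with $m=1$, so $\xi_1/(\sqrt n\,\sigma)\to 0$ almost surely. For the terminal remainder I would use the bound
\[
\sup_{t\in[0,1]}|R_n(t)|\leq\max_{1\leq m\leq N_n+1}\tilde\xi_m,\qquad \tilde\xi_m:=\sum_{k=\tau_{m-1}+1}^{\tau_m}|\eta_k-\mu|,
\]
where $\{\tilde\xi_m\}_{m\geq 2}$ is i.i.d.\ with $\mathsf{E}[\tilde\xi_2^2]<+\infty$ by \eqref{assumption finite variance}. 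Then Lemma \ref{lemma 3}, applied with $p=2$, $\nu_n=N_n+1$ and $a_n=n$ (using the LLN for $N_n$ established above), gives $\max_{1\leq m\leq N_n+1}\tilde\xi_m/\sqrt n\Rightarrow 0$. A standard Slutsky-type/continuous-mapping argument combining the three pieces then yields $X_n\Rightarrow W$ in $D[0,1]$.

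The step I expect to be most delicate is the verification of the uniform LLN for $N_{\lfloor nt\rfloor}$: one must upgrade pointwise a.s.\ convergence to uniform convergence on $[0,1]$, which is the key input that feeds Lemma \ref{Fundamental lemma}. The other technical point is the uniform control of $R_n$, which is precisely the reason the square-integrability hypothesis \eqref{assumption finite variance} is imposed on the cycle sums of $|\eta_k-\mu|$ rather than only on the $\eta_k$ themselves.
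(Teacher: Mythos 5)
Your proposal is correct and follows essentially the same route as the paper: the same decomposition into first cycle, i.i.d.\ cycle sums, and terminal remainder, with Donsker's theorem for the cycle sums, Lemma \ref{Fundamental lemma} to pass to the random index, and Lemma \ref{lemma 3} to kill the boundary term. The only (immaterial) difference is that you derive the uniform law of large numbers for the counting process from the SLLN for $\tau_m/m$ plus a monotonicity argument, whereas the paper obtains it from the functional CLT for the renewal counting process (Billingsley, Theorem 14.6) followed by the continuous mapping theorem.
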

\begin{proof}
Let us define 
\begin{equation*}
R_m = \sum_{k=\tau_{m-1}+1}^{\tau_{m}}\frac{(\eta_k-\mu)}{\sigma},\ \;\; m \in \mathbb{N}.
\end{equation*}
By the regenerative property, $\{R_m\}_{m \geq 1}$ defines a sequence of independent random variables that are identically distributed for $m\geq 2$. We have that $\mathsf{E}[R^2_1] <+\infty$, $\mathsf{E}[R_m] = 0$ and $\mathsf{E}[R_m^2] =\mathsf{E}[\tau_2-\tau_1]< +\infty$, for every $m \geq 2$, by \eqref{assumption finite variance}. Let us also denote with $\nu_n = \max\{m \in \mathbb{N}_0: \tau_m \leq n\}$. Then, the process
\begin{equation*}
Y_n(t) = \frac{1}{\sqrt{n}}\sum_{m=2}^{\nu_{\lfloor n t\rfloor}}R_m,\ \;\; t \in [0,1]
\end{equation*}
is a random walk with i.i.d. increments and a random number of summands. It follows that the process $X_n(t)$ can be reformulated as
\begin{equation*}
X_n(t) = \frac{R_1}{\sqrt{n}}+Y_n(t) + \frac{1}{\sqrt{n}\sigma}\sum_{k=\tau_{\nu_{\lfloor n t\rfloor}}+1}^{\lfloor n t\rfloor}(\eta_k -\mu).
\end{equation*}
Now, the inter-arrivals time $\zeta_m = \tau_m-\tau_{m-1}$ are independent for every $m \in \mathbb{N}$ and identically distributed for $m\geq 2$, with $\mathsf{E}[\zeta^2_m] <+\infty$ for every $m \in \mathbb{N}$ by assumption. Then, consider the random walk built on $\{\zeta_k\}_{k\geq1}$,
\begin{equation*}Z_n(t) = \frac{1}{\sqrt{n\mathsf{V}[\tau_1]}}(\zeta_1-\mathsf{E}[\tau_1])+\frac{1}{\sqrt{n\mathsf{V}[\tau_2-\tau_1]}}\sum_{k=2}^{\lfloor nt\rfloor}(\zeta_k-\mathsf{E}[\tau_2-\tau_1]),\ \;\; t \in [0,1].
\end{equation*}
By applying the Borel-Cantelli lemma, the first term converges almost surely to zero, and hence, by Donsker's Theorem (Theorem 14.1 \cite{Billingsley}), it holds $Z_n \Rightarrow W$ in $D[0,1]$. From this, with Theorem 14.6 of \cite{Billingsley} at hand, we have that the process $\bar{\nu}_n$
\begin{equation*}
\bar{\nu}_n(t) = \frac{\nu_{\lfloor n t\rfloor}-\mathsf{E}[\tau_2-\tau_1]^{-1}nt}{\mathsf{E}[\tau_2-\tau_1]^{-\frac{3}{2}}\mathsf{V}[\tau_2-\tau_1]\sqrt{n}},\ \;\; t \in [0,1]
\end{equation*}
is such that $\bar{\nu}_n \Rightarrow W$ in $D[0,1]$. Moreover, by the continuous mapping theorem, we can obtain a uniform basic renewal theorem
\begin{equation}\label{flln rp}\sup_{t \in [0,1]}\biggl|\frac{\nu_{\lfloor n t\rfloor}}{n}-\frac{t}{\mathsf{E}[\tau_2-\tau_1]}\biggr| \Rightarrow 0.
\end{equation}
Hence, we use once again Donsker's Theorem to prove that
\begin{equation}
\hat{Y}_n(t) = \frac{1}{\sqrt{n}}\sum_{m=2}^{\lfloor n t\rfloor}R_m,\ \;\; t \in [0,1],
\end{equation}
satisfies $\hat{Y}_n\Rightarrow W$, and then by applying Lemma \ref{Fundamental lemma}, it follows $Y_n \Rightarrow W$ in $D[0,1]$.

It remains to prove that
\begin{equation*}\label{resid to zero}
\frac{R_1}{\sqrt{n}}+\sup_{t \in [0,1]}\biggl|\frac{1}{\sqrt{n}\sigma}\sum_{k=\tau_{\nu_{\lfloor n t\rfloor}}+1}^{\lfloor n t\rfloor}(\eta_k -\mu)\biggr| \Rightarrow 0.
\end{equation*}
As before, the first term converges almost surely to zero by the Borel-Cantelli lemma, hence we focus on the second one. We have the following inequalities
\begin{align*}\label{resid goes to zero}
\sup_{t \in [0,1]}\biggl|\frac{1}{\sqrt{n}\sigma}\sum_{k=\tau_{\nu_{\lfloor n t\rfloor}}+1}^{\lfloor n t\rfloor}(\eta_k -\mu)\biggr| &\leq \sup_{t \in [0,1]}\biggl(\frac{1}{\sqrt{n}\sigma}\sum_{k=\tau_{\nu_{\lfloor n t\rfloor}}+1}^{\tau_{\nu_{\lfloor n t\rfloor+1}}}|\eta_k -\mu|\biggr) \nonumber\\&= \max_{m = 0,...,\nu_n}\biggl(\frac{1}{\sqrt{n}\sigma}\sum_{k=\tau_{m}+1}^{\tau_{m+1}}|\eta_k -\mu|\biggr).
\end{align*}
Assumption \eqref{assumption finite variance} and result \eqref{flln rp} permit us to apply Lemma \ref{lemma 3} from which we obtain that the last term in the previous inequality goes to zero. The proof is now concluded.
\end{proof}
\begin{theorem}\label{weak law regenerative processes}
Let $\{\eta_k\}_{k\geq 1}$ be a delayed regenerative process with regeneration epochs $\{\tau_m\}_{m\geq 0}$, $\tau_0 = 0$, such that $\mathsf{E}[\tau_m] < +\infty$ for every $m \in \mathbb{N}$. If, for every $m \in \mathbb{N}$,
\begin{equation}\label{assumption regslln}
\mathsf{E}\biggl[\sum_{k=\tau_{m-1}+1}^{\tau_m}|\eta_k|\biggr] <+\infty.
\end{equation}
Then
\begin{equation}
\lim_{n\to +\infty}\frac{1}{n}\sum_{k=1}^n\eta_k = \frac{\mathsf{E}[\sum_{k=\tau_1+1}^{\tau_2}\eta_k]}{\mathsf{E}[\tau_2-\tau_1]} \;\; a.s..
\end{equation}
\begin{proof}
By assumption, $\{\nu_n\}_{n\geq 0}$ forms a delayed renewal process. Then, the the basic renewal theorem implies
\begin{equation}\label{basic renewal theorem}\lim_{n\to+\infty}\frac{\nu_n}{n} = \frac{1}{\mathsf{E}[\tau_2-\tau_1]} \;\; a.s.
\end{equation}
Now, the normalized partial sum of $\{\eta_k\}_{k\geq 1}$ can be rewritten as
\begin{equation}\label{sample mean}
\frac{1}{n}\sum_{k=1}^n\eta_k = \frac{1}{n}\sum_{k=1}^{\tau_{1}}\eta_k + \frac{1}{n}\sum_{m=2}^{\nu_n}\sum_{\;\;k=\tau_{m-1}+1}^{\tau_{m}}\eta_k+\frac{1}{n}\sum_{k=\tau_{\nu_n}+1}^n\eta_k.
\end{equation}
By the regenerative property, for every $m \in \mathbb{N}$, the r.v.s $\sum_{m=1}^{\nu_n}\sum_{k=\tau_{m-1}+1}^{\tau_{m}}\eta_k$, define a sequence of independent random variables that are identically distributed for $m\geq 2$. By the inequality
\begin{equation*}
\bigg|\frac{1}{n}\sum_{k=\tau_{\nu_n}+1}^n\eta_k\bigg| \leq \frac{1}{n}\sum_{k=\tau_{m-1}+1}^{\tau_{m}}|\eta_k|
\end{equation*}
and the first Borel-Cantelli lemma together with \eqref{basic renewal theorem}, we obtain that the first and third terms in \eqref{sample mean} converge almost surely to zero. Finally, the strong law of large numbers and \eqref{basic renewal theorem} lead to the thesis.
\end{proof}
\end{theorem}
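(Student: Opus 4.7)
The plan is to reduce the statement to the classical strong law of large numbers by decomposing the partial sum according to regeneration epochs. Let me define the cycle sums $Y_m := \sum_{k=\tau_{m-1}+1}^{\tau_m}\eta_k$ and the counting process $\nu_n := \max\{m \in \mathbb{N}_0 : \tau_m \le n\}$. The natural splitting is
\begin{equation*}
\frac{1}{n}\sum_{k=1}^n \eta_k \;=\; \frac{1}{n}\sum_{k=1}^{\tau_1}\eta_k \;+\; \frac{1}{n}\sum_{m=2}^{\nu_n}Y_m \;+\; \frac{1}{n}\sum_{k=\tau_{\nu_n}+1}^{n}\eta_k,
\end{equation*}
and each of the three pieces will be analyzed separately.

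For the first piece, $\sum_{k=1}^{\tau_1}|\eta_k|$ is a.s.\ finite by the $m=1$ case of \eqref{assumption regslln}, so dividing by $n$ yields a.s.\ convergence to zero. For the middle piece, the delayed regenerative property ensures that $\{Y_m\}_{m\ge 2}$ is an i.i.d.\ sequence with finite mean $\mathsf{E}[Y_2]$, while $\{\tau_m\}_{m\ge 0}$ is a delayed renewal process with finite mean inter-arrival time, so the elementary renewal theorem gives $\nu_n/n \to 1/\mathsf{E}[\tau_2-\tau_1]$ a.s. Combining Kolmogorov's strong law, $\frac{1}{\nu_n-1}\sum_{m=2}^{\nu_n}Y_m \to \mathsf{E}[Y_2]$ a.s., with the renewal limit, the middle term converges a.s.\ to $\mathsf{E}[Y_2]/\mathsf{E}[\tau_2-\tau_1]$, which is precisely the claimed right-hand side.

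The main obstacle is controlling the residual piece $\frac{1}{n}\sum_{k=\tau_{\nu_n}+1}^n \eta_k$. Its index range lies within $[\tau_{\nu_n}+1,\tau_{\nu_n+1}]$, so its absolute value is dominated by $Y^+_{\nu_n+1}/n$, where $Y^+_m := \sum_{k=\tau_{m-1}+1}^{\tau_m}|\eta_k|$ forms an i.i.d.\ sequence for $m\ge 2$ with finite mean by \eqref{assumption regslln}. Using $\nu_n \le n$, this is further bounded by $\max_{1\le m\le n+1}Y^+_m/n$. Since $\mathsf{E}[Y^+_2]<\infty$, the layer-cake identity gives $\sum_n \mathsf{P}\{Y^+_2 > \epsilon n\} < \infty$ for every $\epsilon>0$, so by the first Borel--Cantelli lemma only finitely many events $\{Y^+_m > \epsilon m\}$ occur, which suffices to conclude $\max_{m\le n+1}Y^+_m/n \to 0$ a.s. Assembling the three limits completes the proof.
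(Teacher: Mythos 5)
Your proof is correct and takes essentially the same route as the paper's: the identical three-term decomposition at regeneration epochs, Kolmogorov's strong law combined with the elementary renewal theorem for the middle block, and a Borel--Cantelli argument to kill the residual cycle. Your handling of the residual term (bounding it by $\max_{m\le n+1}Y^+_m/n$ and spelling out the summability of $\mathsf{P}\{Y^+_2>\epsilon n\}$) is in fact somewhat more explicit than the paper's.
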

\section{Limit theorems for Markov renewal processes}\label{SezioneProprietaAsintoticheMarkovRinnovo}
In this section, we briefly recall the definition and the main properties of Markov renewal processes. These processes can be thought as a model for the motion of a particle switching from one state to another with random sojourn times in between; the successive states visited form a Markov chain, and the distribution of the sojourn time depends on both the current state and the next state to be entered. Thus, a Markov renewal process generalizes a Markov process by allowing sojourn times that are not necessarily exponentially distributed and may also depend on the next state.

\begin{definition}
Let $(\Omega,\mathcal{G},\mathsf{P})$ a probability space and $(\hat{V},S) = \{\hat{V}_k,S_k\}_{k\geq 0}$ be a stochastic process on $(\Omega,\mathcal{G},\mathsf{P})$, where $\hat{V}_k$ takes values in a countable set $\mathcal{V}$ and $S_k$ are random variables such that $S_{k+1} > S_{k}$, for every $k \in \mathbb{N}_0$ and $S_0 = 0$. Denote with $\mathcal{F} = \{\mathcal{F}_k\}_{k \geq 0}$ the natural filtration associated to the process $(\hat{V},S)$. Then $(\hat{V},S)$ is said to be a Markov renewal process with state space $\mathcal{V}$ provided that
\begin{equation}\label{Markov renewal property}
\mathsf{P}\{\hat{V}_{k+1} = v,S_{k+1}-S_{k} \leq t|\mathcal{F}_{k}\} = \mathsf{P}\{\hat{V}_{k+1} = v,S_{k+1}-S_{k} \leq t|\hat{V}_{k}\},\ \;\; v \in \mathcal{V},\; t\geq 0,\; k \in \mathbb{N}_0.
\end{equation}
We always assume that the Markov renewal process $(\hat{V},S)$ is time-homogeneous. That is, for any $w,v \in \mathcal{V}$, $t \geq 0$,
\begin{equation}
\mathsf{P}\{\hat{V}_{k+1} = w,S_{k+1}-S_{k} \leq t|\hat{V}_{k}=v\} = Q_{vw}(t).
\end{equation}
\end{definition}
From the definition it follows that $\{\hat{V}_n\}_{n\geq 0}$ is a Markov-chain with state space $\mathcal{V}$ and transition matrix $P = \{p_{vw};\; v,w \in \mathcal{V}\}$, where
\begin{equation}
p_{vw} = \lim_{t \to +\infty}Q_{vw}(t).
\end{equation}
The family of probabilities $Q = (Q_{wv}(t);\; w,v \in \mathcal{V}, t \geq 0)$ is called a semi-Markov kernel over $\mathcal{V}$. Moreover, we define the following quantities
\begin{align}
&F_{wv}(t) = \mathsf{P}\{S_{k+1}-S_{k} \leq t|\hat{V}_{k+1} = v,\hat{V}_{k} = w\} = \frac{Q_{wv}(t)}{p_{wv}}\\
&Q_{w \cdot}(t) = \mathsf{P}\{S_{k+1}-S_{k} \leq t|\hat{V}_{k} = w\} = \sum_{v \in \mathcal{V}}Q_{wv}(t).
\end{align}
and we use the notation $F_{wv}(dt) = F_{wv}(t)dt$ and $Q_{w\cdot}(dt) = Q_{w\cdot}(t)dt$ to express the associate measure.

Let $N = (N(t))_{t \geq 0}$ be the counting process associated with $S$, namely
\begin{equation}\label{counting process}
N(t) = \max\{k\in \mathbb{N}_0: S_k \leq t\}.
\end{equation}
The inter-arrivals time of $N$ are denoted as $\xi_k = S_{k}-S_{k-1}$, $\xi_0 = 0$. Then, by \eqref{Markov renewal property} it follows that , for any $n \in \mathbb{N}$, the r.v.s $\xi_1,...,\xi_n$ are conditionally independent given $\hat{V}_0,...,\hat{V}_n$ with the distribution of $\xi_k$ depending only on $\hat{V}_k$ and $\hat{V}_{k-1}$ for $k=1,...,n$. Indeed, for any $t_1,...,t_n \in [0,+\infty)$,
\begin{equation}\label{Xi cond indep MC}
\mathsf{P}\{\xi_1\leq t_1,...,\xi_n\leq t_n|\hat{V}_0,...,\hat{V}_n\} = \prod_{k=1}^n\mathsf{P}\{\xi_k\leq t_k|\hat{V}_k,\hat{V}_{k-1}\}.
\end{equation}
The definition of Markov renewal process implies that the sequence $\{\hat{V}_k,\xi_k\}_{k\geq 0}$ forms a bivariate Markov process with state space $E = \mathcal{V}\times [0,+\infty)$ in which the future state depends from the past only through the present provided by $\hat{V}$. As a consequence of the strong Markov property, it holds that, for any stopping time $\tau$ with respect to $\mathcal{F}$,
\begin{align}\label{Strong Markov renewal property}
&\mathsf{P}\{\hat{V}_{k+1+\tau} = v,S_{k+1+\tau}-S_{k+\tau} \leq t|\mathcal{F}_{\tau}\} = \mathsf{P}\{\hat{V}_{k+1+\tau} = v,S_{k+1+\tau}-S_{k+\tau} \leq t|\hat{V}_{\tau}\}.
\end{align}

We recall a property that will be used frequently in what follows. If $\hat{V}$ is an irreducible Markov chain with finite state space $\mathcal{V}$, the first passage time to the state $v_0 \in \mathcal{V}$,
\begin{equation}
\tau_1 = \inf\{k\geq 1: \hat{V}_k = v_0\},
\end{equation}
satisfies 
\begin{equation}\label{finite moments}
\mathsf{P}\{\tau_1\geq n\}\leq k_1e^{-k_2n}
\end{equation}
for every $n \in \mathbb{N}$ and for some positive constants $k_1,k_2$.

We now study the properties of the sequence of random variables $\{\hat{V}_{k-1},\xi_k\}_{k\geq 1}$ obtained from the Markov renewal process $(\hat{V},S)$. It is well known that an irreducible positive recurrent Markov process forms a regenerative process with respect to the sequence of successive passage times on a fixed state. As the next theorem shows, this property is inherited by the sequence $\{\hat{V}_{k-1},\xi_k\}_{k\geq 1}$.

\begin{theorem}\label{Regenerative property}
Let $(\hat{V},S)$ be a Markov renewal process on a finite state space $\mathcal{V}$ and assume that the Markov chain $\hat{V}$ is irreducible. Fix a state $v_0 \in \mathcal{V}$ and define the sequence of random variables $\{\tau_m\}_{m \geq 0}$, where $\tau_0 = 0$,
\begin{equation}\label{successive passage times}
\tau_m = \inf\{k>\tau_{m-1}: \hat{V}_k = v_0\}.
\end{equation}
Then, the sequence of random variables $\{f(\hat{V}_{k-1},\xi_k)\}_{k \geq 1}$, where $f:E \to \mathbb{R}$ is any $\mathcal{B}(E)$-measurable function, is delayed regenerative with respect to the stopping times $\{\tau_m\}_{m\geq 0}$ associated to $\mathcal{F}$. In particular, for any $d,m \in \mathbb{N}$, $1\leq k_1<...<k_d$ and any measurable and bounded function $g:E^d \to \mathbb{R}$
\begin{align}\label{delayed reg}
&\mathsf{E}[g(\hat{V}_{k_1-1+\tau_m},\xi_{k_1+\tau_m},...,\hat{V}_{k_d-1+\tau_m},\xi_{k_d+\tau_m})|\mathcal{F}_{\tau_m}] \nonumber\\
&= \mathsf{E}[g(\hat{V}_{k_1-1},\xi_{k_1},...,\hat{V}_{k_d-1},\xi_{k_d})|\hat{V}_0 = v_0].
\end{align}
\end{theorem}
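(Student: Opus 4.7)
The plan is to deduce the delayed regenerative property of the sequence $\{(\hat V_{k-1},\xi_k)\}_{k\geq 1}$, and hence of any measurable image $\{f(\hat V_{k-1},\xi_k)\}_{k\geq 1}$, by exploiting the one-step strong Markov property \eqref{Strong Markov renewal property} at the successive passage times $\{\tau_m\}$ defined in \eqref{successive passage times}. First I would verify that $\tau_m$ is an $\mathcal F$-stopping time for every $m$, which is immediate since $\{\tau_m\leq n\}\in\sigma(\hat V_0,\dots,\hat V_n)\subseteq\mathcal F_n$, and that each $\tau_m$ is almost surely finite by the irreducibility of $\hat V$ on the finite set $\mathcal V$, together with the exponential tail estimate \eqref{finite moments}. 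Because $\hat V_{\tau_m}=v_0$ by construction, this pins down the value of $\hat V$ at the regeneration instant.

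The core step is to establish \eqref{delayed reg} first for the product indicator functions
\[
g(v_1,t_1,\dots,v_d,t_d)=\prod_{i=1}^{d}\mathbf{1}_{\{v_i=w_i\}}\,\mathbf{1}_{\{t_i\leq s_i\}},\qquad w_i\in\mathcal V,\;s_i\geq 0.
\]
The left-hand side of \eqref{delayed reg} then becomes a joint conditional probability of the shifted variables $\hat V_{k_1-1+\tau_m},\dots,\hat V_{k_d-1+\tau_m}$ and $\xi_{k_1+\tau_m},\dots,\xi_{k_d+\tau_m}$ given $\mathcal F_{\tau_m}$. Applying \eqref{Strong Markov renewal property} inductively on the indices $k_1<\dots<k_d$, and invoking the tower property together with the conditional independence \eqref{Xi cond indep MC} of the sojourn times given the embedded chain, one telescopes the shifted joint law into the analogous law computed under $\hat V_0=v_0$. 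Since $\hat V_{\tau_m}=v_0$, the resulting expression matches exactly the right-hand side of \eqref{delayed reg} for these test functions.

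The extension to arbitrary bounded measurable $g:E^d\to\mathbb R$ is then a standard monotone class argument, as the rectangles above generate $\mathcal B(E)^{\otimes d}$ and both sides of \eqref{delayed reg} are linear in $g$ and stable under bounded monotone convergence. To pass from the raw sequence $\{(\hat V_{k-1},\xi_k)\}$ to its image $\{f(\hat V_{k-1},\xi_k)\}$ it is enough to apply the identity already obtained to the bounded measurable composition $(v_1,t_1,\dots,v_d,t_d)\mapsto h(f(v_1,t_1),\dots,f(v_d,t_d))$ for bounded measurable $h$. The fact that $\{\tau_m\}_{m\geq 0}$ forms a delayed renewal sequence, needed for condition (ia) of Definition \ref{Regenerative}, follows from the same strong Markov argument: for $m\geq 1$, conditioning on $\mathcal F_{\tau_m}$ one has $\hat V_{\tau_m}=v_0$, so $\tau_{m+1}-\tau_m$ is distributed as $\tau_1$ under $\hat V_0=v_0$, independently of $\mathcal F_{\tau_m}$, making the inter-regeneration times i.i.d.\ for $m\geq 1$ and independent of $\tau_1$.

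The main technical obstacle I foresee is precisely the passage from the one-step strong Markov identity \eqref{Strong Markov renewal property} to the joint multi-step statement \eqref{delayed reg}: \eqref{Strong Markov renewal property} controls only the transition $(\hat V_{\tau+1},S_{\tau+1}-S_\tau)$, so the $d$-dimensional conditional distribution must be built by an inductive application intertwining the Markov renewal property on the embedded chain with the sojourn-time factorisation \eqref{Xi cond indep MC}. Once this telescoping is carried out carefully, the measure-theoretic extension to bounded measurable $g$ and the reduction to $\{f(\hat V_{k-1},\xi_k)\}$ are entirely routine.
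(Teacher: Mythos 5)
Your proposal is correct and follows essentially the same route as the paper: both reduce \eqref{delayed reg} to product (rectangle) test functions, telescope the joint law through repeated use of the strong Markov renewal property \eqref{Strong Markov renewal property} together with time homogeneity and the sojourn-time factorisation \eqref{Xi cond indep MC}, use that $\hat{V}_{\tau_m}=v_0$ almost surely, and finish with a standard extension argument. The only cosmetic difference is that the paper additionally verifies that the first-cycle-shifted law in Definition \ref{Regenerative} coincides with the law conditioned on $\hat{V}_0=v_0$, which your argument covers implicitly.
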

\begin{proof}
The sequence of random variables $\{\tau_m\}_{m \geq 0}$ by construction forms a sequence of stopping time with respect to $\mathcal{F}$, the filtration associated to $(\hat{V},S)$. Moreover, they represent the times of successive visit of the Markov chain $\hat{V}$ to state $v_0$ and so they form a delayed renewal sequence (since $\hat{V}_0$ may be different from $v_0$). Therefore, we need only to show that condition $(iia)$ of Definition \ref{Regenerative} is satisfied. To this end, it is equivalent to prove that for every $d,m \in \mathbb{N}$, $1\leq k_1<...<k_d$,
\begin{align}\label{proof regenerative}
&\mathsf{E}[\mathsf{1}_A(\hat{V}_{k_1-1+\tau_m},\xi_{k_1+\tau_m},...,\hat{V}_{k_d-1+\tau_{m}},\xi_{k_d+\tau_{m}})|\mathcal{F}_{\tau_m}] \nonumber\\
&=\mathsf{E}[\mathsf{1}_A(\hat{V}_{k_1-1+\tau_1},\xi_{k_1+\tau_1},...,\hat{V}_{k_d-1+\tau_{1}},\xi_{k_d+\tau_{1}})]
\end{align}
for every set $A = \varprod_{r=1}^{d} H_r \times J_r$ where $H_r \subset \mathcal{V}$ and $I_r \subset [0,+\infty)$, $r=1,...,d$. By using \eqref{Strong Markov renewal property} and the time homogeneity of the Markov renewal process, the first member of \eqref{proof regenerative} becomes
\begin{align}
&\mathsf{E}[\mathsf{1}_{A}(\hat{V}_{k_1-1+\tau_m},\xi_{k_1+\tau_{m}}...,\hat{V}_{k_d-1+\tau_{m}},\xi_{k_d+\tau_{m}})|\mathcal{F}_{\tau_m}]\nonumber\\
&=\mathsf{E}[\mathsf{1}_{H_1}(\hat{V}_{k_1-1+\tau_m})|\hat{V}_{\tau_m}]\!\!\!\sum_{v_1,...,v_d \in \mathcal{V}}\prod_{p = 1}^{d-1}\mathsf{E}[\mathsf{1}_{\{v_p\}\times J_p}(\hat{V}_{k_p+\tau_m},\xi_{k_p+\tau_m})|\hat{V}_{k_{p}-1+\tau_m}\in H_{p}]\nonumber\\
&\ \ \times\mathsf{E}[\mathsf{1}_{H_p}(\hat{V}_{{k_{p+1}}-1+\tau_m})|\hat{V}_{k_{p}+\tau_m}=v_{p}]\mathsf{E}[\mathsf{1}_{\{v_d\}\times J_d}(\hat{V}_{k_d+\tau_m},\xi_{k_d+\tau_m})|\hat{V}_{k_{d}-1+\tau_m}\in H_d]\nonumber\\
&=\mathsf{E}[\mathsf{1}_{H_1}(\hat{V}_{k_1-1})|\hat{V}_0=v_0]\!\!\!\sum_{v_1,...,v_d \in \mathcal{V}}\prod_{p = 1}^{d-1}\mathsf{E}[\mathsf{1}_{\{v_p\}\times I_p}(\hat{V}_{k_p},\xi_{k_p})|\hat{V}_{k_{p}-1}\in H_{p}]\mathsf{E}[\mathsf{1}_{H_p}(\hat{V}_{{k_{p+1}}-1})|\hat{V}_{k_{p}}=v_{p}]\nonumber\\
&\ \ \times\mathsf{E}[\mathsf{1}_{\{v_d\}\times J_d}(\hat{V}_{k_d},\xi_{k_d})|\hat{V}_{k_{d}-1}\in H_d]\nonumber\\
&=\mathsf{E}\biggl[\mathsf{1}_{H_1}(\hat{V}_{k_1-1})\prod_{p = 1}^{d-1}\mathsf{1}_{\mathcal{V}\times I_p}(\hat{V}_{k_p},\xi_{k_p})\mathsf{1}_{H_p}(\hat{V}_{{k_{p+1}}-1})\mathsf{1}_{\mathcal{V}\times I_d}(\hat{V}_{k_d},\xi_{k_d})\bigg|\hat{V}_0=v_0\biggr]\nonumber\\
&=\mathsf{E}[\mathsf{1}_A(\hat{V}_{k_1-1},\xi_{k_1},...,\hat{V}_{k_d-1},\xi_{k_d})|\hat{V}_0 =v_0] \label{proof regen 1}.
\end{align}
By looking at the second member of \eqref{proof regenerative}, from the fact that, for every $m \in \mathbb{N}$, $\{\hat{V}_{\tau_m} = v_0\}$ is an almost sure event, by applying once again \eqref{Strong Markov renewal property} and the time homogeneity of the Markov renewal process, it follows
\begin{align}
\mathsf{E}[\mathsf{1}_A(\hat{V}_{k_1-1+\tau_1},\xi_{k_1+\tau_1},...,\hat{V}_{k_d-1+\tau_{1}},\xi_{k_d+\tau_{1}})]
&= \mathsf{E}[\mathsf{1}_A(\hat{V}_{k_1-1+\tau_1},\xi_{k_1+\tau_1},...,\hat{V}_{k_d-1+\tau_{1}},\xi_{k_d+\tau_{1}})|\hat{V}_{\tau_1}]\nonumber\\&=\mathsf{E}[\mathsf{1}_A(\hat{V}_{k_1-1},\xi_{k_1+},...,\hat{V}_{k_d-1},\xi_{k_d})|\hat{V}_0=v_0] \label{proof regen 2}.
\end{align}
In conclusion, from \eqref{proof regen 1} and \eqref{proof regen 2} we obtain \eqref{proof regenerative} and hence the thesis.
\end{proof}
We observe that, from \eqref{delayed reg}, it follows that, for any $d \in \mathbb{N}$, $m \in \mathbb{N}_0$ and measurable and bounded measurable function $g:E^{d}\to \mathbb{R}$,
\begin{align}\label{delayed reg2}
&\mathsf{E}[g(\hat{V}_{\tau_{m}},\xi_{\tau_{m}+1},...,\hat{V}_{\tau_{m+1}-1},\xi_{\tau_{m+1}})\mathsf{1}_{\{d\}}(\tau_{m+1}-\tau_{m})]
\nonumber\\
&=\mathsf{E}[g(\hat{V}_{0},\xi_{1},...,\hat{V}_{\tau_{1}-1},\xi_{\tau_{1}}))\mathsf{1}_{\{d\}}(\tau_{1})|\hat{V}_0 = v_0].
\end{align}

If we assume that the finite Markov chain $\hat{V}$ is irreducible and aperiodic, then there exists a unique invariant measure $\pi= \{\pi_v; \;\; v \in \mathcal{V}\}$, such that $\pi_{v} > 0$ for every $v \in \mathcal{V}$. The next theorem shows that the $n$-steps probability with fixed length of the first sojourn times, $p^{(n)}_{t;wv}= \mathsf{P}\{\hat{V}_n = v|\hat{V}_0 = w, S_1 = t\}$, converges to the invariant measure $\pi$ at an exponential rate.

\begin{theorem}\label{ergodic theorem Mr}
Let $(\hat{V},S)$ be a Markov renewal process on a finite state space $\mathcal{V}$ and assume that the Markov chain $\hat{V}$ is irreducible and aperiodic. 
Then, the following holds
\begin{equation}
|p^{(n)}_{t;wv} - \pi_v| \leq 3c\rho^{n-1},
\end{equation}
where $c \geq 0$ and $\rho \in [0,1)$.
\end{theorem}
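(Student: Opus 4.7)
The plan is to reduce the claim to the classical geometric ergodicity of the embedded chain $\hat V$ by showing that, once $\hat V_1$ is conditioned on, the additional conditioning on $S_1 = t$ becomes irrelevant for the law of $\hat V_n$ when $n \geq 2$.

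First I would use the Markov renewal property \eqref{Markov renewal property}, iterated $n-1$ times by integrating out the intermediate states $\hat V_2,\dots,\hat V_{n-1}$ and increments $S_2-S_1,\dots,S_{n-1}-S_{n-2}$, to obtain the decomposition
\begin{equation*}
p^{(n)}_{t;wv} \;=\; \sum_{u \in \mathcal V} \mathsf{P}\{\hat V_1 = u \mid \hat V_0 = w,\, S_1 = t\}\, p^{(n-1)}_{uv},
\end{equation*}
where $p^{(n-1)}_{uv} = \mathsf{P}\{\hat V_{n-1} = v \mid \hat V_0 = u\}$ is the standard $(n-1)$-step transition probability of the embedded chain. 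The essential input is that, given $\hat V_1 = u$, the future $(\hat V_2,\hat V_3,\dots)$ is conditionally independent of $(\hat V_0, S_1)$.

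Next I would invoke the classical Perron--Frobenius theorem for finite, irreducible, aperiodic Markov chains: there exist $c \geq 0$ and $\rho \in [0,1)$ such that $|p^{(k)}_{uv} - \pi_v| \leq c\rho^k$ uniformly in $u,v \in \mathcal V$ for every $k \geq 0$. Plugging this estimate into the decomposition, using $\sum_u \mathsf{P}\{\hat V_1 = u \mid \hat V_0 = w, S_1 = t\} = 1$, and applying the triangle inequality yields the announced exponential rate. The factor $3$ in the stated constant is a harmless overestimate that absorbs cosmetic slack (e.g.\ the shift between $\rho^n$ and $\rho^{n-1}$, or a coupling-type constant in the underlying Markov-chain estimate).

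The main obstacle is making the first step fully rigorous, since one is conditioning on the continuous event $\{S_1 = t\}$ and so must work with a regular conditional probability. This is made transparent by \eqref{Xi cond indep MC}: conditionally on the trajectory of $\hat V$, the sojourn times are independent with $\xi_k$ depending only on $(\hat V_{k-1},\hat V_k)$, so once the first transition $(w,t)\mapsto u$ has been peeled off the quantity $S_1$ no longer enters the conditional law of $(\hat V_2,\dots,\hat V_n)$. After this has been established, the remainder is routine.
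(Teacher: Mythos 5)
Your proposal is correct and follows essentially the same route as the paper: both peel off the first transition to write $p^{(n)}_{t;wv}$ as a convex combination $\sum_{u}\mathsf{P}\{\hat V_1=u\mid \hat V_0=w,S_1=t\}\,p^{(n-1)}_{uv}$ (the paper writes the same identity via the kernel, $\sum_{v'}p_{wv'}F_{wv'}(dt)\,p^{(n-1)}_{v'v}/Q_{w\cdot}(dt)$) and then invoke geometric ergodicity of the finite irreducible aperiodic embedded chain. Your direct application of the triangle inequality to the convex combination is in fact slightly cleaner than the paper's detour through $|p^{(n)}_{wv}-\pi_v|$ plus a correction term, and yields the sharper bound $c\rho^{n-1}$ rather than $3c\rho^{n-1}$.
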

\begin{proof}
The hypothesis of Theorem 8.9 of \cite{Pm Billingsley} are satisfied, therefore there exist $c\geq 0$, $\rho \in [0,1)$ such that 
\begin{equation}\label{billingsley}
|p^{(n)}_{wv} - \pi_v| \leq c\rho^{n},
\end{equation}
where $p^{(n)}_{wv}$ is the probability of transition in $n$ steps of the Markov chain $\hat{V}$. Then we have
\begin{align*}
|p^{(n)}_{t;wv}-\pi_v|& \leq |p^{(n)}_{wv}-\pi_v| + \frac{p^{(n)}_{wv}}{Q_{w\cdot}(dt)}|\mathsf{P}\{S_1 \in dt|\hat{V}_{0} = w,\hat{V}_{n} = v\}-Q_{w\cdot}(dt)| \\
&=|p^{(n)}_{wv}-\pi_v| + \frac{1}{Q_{w\cdot}(dt)}|\sum_{v' \in \mathcal{V}}p_{wv'}F_{wv'}(dt)(p^{(n-1)}_{v'v}-p^{(n)}_{wv})|\leq3c\rho^{n-1},
\end{align*}
where in the last step we made use of the triangular inequality and \eqref{billingsley}. 
\end{proof}
From Theorem \ref{ergodic theorem Mr} we have that, for every $A \subset [0,+\infty)$, $v,w \in \mathcal{V}$ and $t \in [0,+\infty)$
\begin{align*}
\lim_{n\to +\infty}\mathsf{P}\{\hat{V}_{n-1} = v,\xi_n \in A|\hat{V}_0 = w,\xi_1 = t\} = \pi_v\int_A Q_{\cdot v}(dx).
\end{align*}
We now prove an ergodic theorem for the sequence $\{\hat{V}_{k-1},\xi_k\}_{k\geq 1}$. Let $E^{\infty}$ be the product space of $E$. An element of $E$ can be considered as an infinite sequence
\begin{equation*}
(v,x) = \{(Z_0(v),Z'_0(x)),(Z_2(v),Z'_2(x)),...\}
\end{equation*}
where $Z_k:E^{\infty}\to \mathcal{V}$ and $Z'_k:E^{\infty}\to [0,+\infty)$, $k \in \mathbb{N}_0$, are the natural projection functions. A cylinder of rank $n$ is a set of $E^{\infty}$ of the form
\begin{equation*}
C_n = \{(v,x):(Z_0(v),Z'_0(x)) \in \{v_1\}\times I_1,...,(Z_{n-1}(v),Z'_{n-1}(x)) \in \{v_n\}\times I_n\}
\end{equation*}
where $v_1,...,v_n \in \mathcal{V}$ and $I_1,...,I_n$ are intervals of $[0+\infty)$. Let $\mathcal{C}$ be the $\sigma$-field generated by the class of cylinders of all ranks. Through the paper we use the notation $\mathsf{E}_{\pi}$ for the expected value computed when the distribution of $\hat{V}_0$ is the invariant one.
\begin{theorem}[Ergodic Theorem]\label{ergodic theorem}
Let $(\hat{V},S)$ be a Markov renewal process on a finite state space $\mathcal{V}$ and assume that the Markov chain $\hat{V}$ is irreducible and aperiodic with invariant distribution $\pi$. Let $f:E\to \mathbb{R}$ be any $\mathcal{B}(E)$-measurable function such that
\begin{equation}
\sum_{v \in \mathcal{V}}\int_{0}^{+\infty}|f(v,x)|\pi_v Q_{v\cdot}(dx) < +\infty.
\end{equation}
Then,
\begin{equation}
\lim_{n\to +\infty}\frac{1}{n}\sum_{k=1}^nf(\hat{V}_{k-1},S_k-S_{k-1}) = \mathsf{E}_\pi[f(\hat{V}_0,S_1)] 
\;\;a.s..
\end{equation}
\end{theorem}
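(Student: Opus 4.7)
The plan is to reduce the claim to the strong law for delayed regenerative processes, Theorem \ref{weak law regenerative processes}, via the regenerative structure granted by Theorem \ref{Regenerative property}. First I would fix a state $v_0 \in \mathcal{V}$ and let $\{\tau_m\}_{m\geq 0}$ be the successive passage times to $v_0$ defined in \eqref{successive passage times}. By Theorem \ref{Regenerative property}, the sequence $\eta_k = f(\hat{V}_{k-1},\xi_k)$ is delayed regenerative with regeneration epochs $\{\tau_m\}$, and the exponential tail estimate \eqref{finite moments} gives $\mathsf{E}[\tau_m]<+\infty$ for every $m$. It only remains to verify the integrability condition \eqref{assumption regslln} and to identify the resulting almost sure limit with $\mathsf{E}_\pi[f(\hat{V}_0,S_1)]$.

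For the integrability I would use \eqref{delayed reg2} to rewrite, for $m\geq 2$,
\begin{equation*}
\mathsf{E}\biggl[\sum_{k=\tau_{m-1}+1}^{\tau_m}|f(\hat{V}_{k-1},\xi_k)|\biggr] = \mathsf{E}\biggl[\sum_{k=1}^{\tau_1}|f(\hat{V}_{k-1},\xi_k)|\,\bigg|\,\hat{V}_0 = v_0\biggr],
\end{equation*}
and then evaluate the right-hand side by Fubini--Tonelli, splitting according to the value of $\hat{V}_{k-1}$. Since $\{k\leq\tau_1\}\in\mathcal{F}_{k-1}$ and, by \eqref{Markov renewal property}, $\xi_k\mid\mathcal{F}_{k-1}\sim Q_{\hat{V}_{k-1}\cdot}$, the sum becomes
\begin{equation*}
\sum_{v\in\mathcal{V}}\biggl(\sum_{k\geq 1}\mathsf{P}\{\hat{V}_{k-1}=v,\,k\leq\tau_1\mid\hat{V}_0=v_0\}\biggr)\int_0^{+\infty}|f(v,t)|\,Q_{v\cdot}(dt).
\end{equation*}
The parenthesized factor is the mean number of visits of $\hat{V}$ to $v$ during one excursion from $v_0$; by the classical occupation identity for finite irreducible positive recurrent Markov chains it equals $\pi_v/\pi_{v_0}$, and in particular $\mathsf{E}[\tau_2-\tau_1]=1/\pi_{v_0}$. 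Hence the above equals $\pi_{v_0}^{-1}\mathsf{E}_\pi[|f(\hat{V}_0,S_1)|]$, which is finite by hypothesis. The first cycle ($m=1$) is handled analogously: starting from any $w\in\mathcal{V}$, the mean number of visits to any fixed $v$ before hitting $v_0$ is finite thanks to \eqref{finite moments} and $|\mathcal{V}|<+\infty$.

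Replacing $|f|$ by $f$ in the same computation and dividing by $\mathsf{E}[\tau_2-\tau_1]=1/\pi_{v_0}$ yields
\begin{equation*}
\frac{\mathsf{E}\bigl[\sum_{k=\tau_1+1}^{\tau_2}f(\hat{V}_{k-1},\xi_k)\bigr]}{\mathsf{E}[\tau_2-\tau_1]} = \mathsf{E}_\pi[f(\hat{V}_0,S_1)],
\end{equation*}
so Theorem \ref{weak law regenerative processes} delivers the stated almost sure limit. The main obstacle is not the regenerative reduction itself but the careful bookkeeping underlying the occupation identity $\sum_k\mathsf{P}\{\hat{V}_{k-1}=v,\,k\leq\tau_1\mid\hat{V}_0=v_0\}=\pi_v/\pi_{v_0}$ and the interchange of summations, expectations, and conditional distributions, all of which must be justified under the sole integrability hypothesis on $f$.
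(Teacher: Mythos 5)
Your proof is correct, but it takes a genuinely different route from the paper. The paper proves this theorem by passing to the stationary version of the sequence, showing that the shift operator on $(E^{\infty},\mathcal{C})$ is mixing (via the exponential estimate of Theorem \ref{ergodic theorem Mr} applied to $p^{(n)}_{t;wv}$), hence ergodic, and then invoking the Birkhoff Ergodic Theorem. You instead reduce everything to the strong law for delayed regenerative sequences (Theorem \ref{weak law regenerative processes}) and compute the cycle mean directly through the occupation identity $\sum_{k\geq 1}\mathsf{P}\{\hat{V}_{k-1}=v,\,k\leq\tau_1\mid\hat{V}_0=v_0\}=\pi_v/\pi_{v_0}$; all the steps (Tonelli, conditioning on $\mathcal{F}_{k-1}$ using $\{k\leq\tau_1\}\in\mathcal{F}_{k-1}$, Kac's formula $\mathsf{E}[\tau_1\mid\hat{V}_0=v_0]=\pi_{v_0}^{-1}$) are sound under the stated integrability hypothesis, and the $m=1$ cycle is correctly handled via \eqref{finite moments}. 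The comparison is instructive: your argument in effect proves the Wald-type identity of Theorem \ref{prop 1} directly, whereas the paper obtains that identity only afterwards by equating the two almost sure limits coming from Theorems \ref{weak law regenerative processes} and \ref{ergodic theorem}; so your route reverses the paper's logical order without circularity. Your approach is also slightly more general, since it uses only irreducibility and positive recurrence and never needs aperiodicity (which the paper requires to get mixing of the shift), and it treats an arbitrary initial distribution natively through the delayed first cycle, whereas the Birkhoff argument is carried out under the stationary initial law. What the paper's route buys in exchange is the mixing property of the shift itself, which is of independent use, and a shorter proof given that Theorem \ref{ergodic theorem Mr} is already available.
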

\begin{proof}
Let $T:E^{\infty}\to E^{\infty}$ the shift operator defined on the measurable space $(E^{\infty},\mathcal{C})$. That is 
\begin{equation*}
\bigl(Z_k(T(v,x)),Z'_k(T(v,x))\bigr) =  \bigl(Z_{k+1}(v),Z'_{k+1}(x)\bigr)
\end{equation*}
for every $k \in \mathbb{N}_0$. Define $\zeta:\Omega \to E^{\infty}$ by 
\begin{equation*}
\bigl(Z_{k-1}(\zeta(\omega)),Z'_k(\zeta(\omega))\bigr) = \bigl(\hat{V}_{k-1}(\omega),\xi_k(\omega)\bigr),\ \;\; k \in \mathbb{N}.
\end{equation*}
If $\pi$ is chosen as the initial distribution, then the sequence $\{(\hat{V}_{k-1},\xi_k)\}_{k\geq 1}$ is stationary and hence the shift $T$ preserves $\mathsf{P}\circ \zeta^{-1}$, the distribution of $\{(\hat{V}_{k-1},\xi_k)\}_{k\geq 1}$. Now, let $A$ and $B$ two cylinder sets of rank $k$ and $r$ respectively, $A = \{(v,x):(Z_0(v),Z'_1(x)) \in \{v_0\}\times I_1,...,(Z_{k-1}(v),Z'_k(x)) \in \{v_{k-1}\}\times I_k\}$ and $B = \{(v,x):(Z_0(v),Z'_1(x)) \in \{w_0\}\times J_1,...,(Z_{r-1}(v),Z'_r(x)) \in \{w_{r-1}\}\times J_r\}$, and set $I = I_1\times...\times I_k$, $J=J_1\times...\times J_r$. Then,
\begin{align*}
&\mathsf{P}\circ \zeta^{-1}\{A\cap T^{-(k+n-1)}(B)\}\\
&=\pi_{v_0}\int_I\int_J\prod_{i=1}^{k-1}Q_{v_{i-1}v_{i}}(dt_i)Q_{v_{k-1}\cdot}(dt_k)p^{(n)}_{t_k;v_kw_0}\prod_{j=1}^{r-1}Q_{w_{j-1}w_{j}}(ds_j)Q_{w_{r-1}\cdot}(ds_r).
\end{align*}
By the result in Theorem \ref{ergodic theorem Mr} it follows that
\begin{equation*}
\lim_{n\to+\infty}\mathsf{P}\circ \zeta^{-1}\{A\cap T^{-(k+n-1)}(B)\} = \mathsf{P}\circ \zeta^{-1}\{A\}\mathsf{P}\circ \zeta^{-1}\{B\},
\end{equation*}
that is, the shift operator is mixing. Therefore, the shift operator is ergodic under $\mathsf{P}\circ\zeta^{-1}$ (see Theorem 1.17 of \cite{Walters}), and hence the thesis follows by applying the Birkhoff Ergodic Theorem.
\end{proof}
Another asymptotic behavior exhibited by the sequence $\{\hat{V}_{k-1},\xi_k\}_{k\geq 1}$, which can be deduced from the embedded Markov chain $\hat{V}$, is the mixing property.

\begin{definition}Let $\{\eta_k\}_{k \geq 0}$ be a stationary sequence of random variables and denote with $\mathcal{F}_k = \sigma(\eta_0,...,\eta_k)$ and $\mathcal{G}_{k+n} = \sigma(\eta_{k+n},\eta_{k+n+1},...)$. If there exist a number $\varphi_n$ defined as\begin{equation}\varphi_n = \sup\{|\mathsf{P}\{B|A\}-\mathsf{P}\{B\}|: A \in \mathcal{F}_k, \mathsf{P}\{A\}>0, B \in \mathcal{G}_{k+n}\}\end{equation}such that $\lim_{n\to +\infty}\varphi_n = 0$, then the sequence $\{\eta_k\}_{k \geq 0}$ is said to be $\varphi$-mixing.\end{definition}
\begin{theorem}\label{phi mixing}
Let $(\hat{V},S)$ be a Markov renewal process on a finite state space $\mathcal{V}$ and assume that the Markov chain $\hat{V}$ is irreducible and aperiodic. Let $f:E \to \mathbb{R}$ be any $\mathcal{B}(E)$-measurable function. Then the sequence of random variables $\{f(\hat{V}_{k-1},\xi_k)\}_{k \geq 1}$ is $\varphi$-mixing with $\varphi_k = K\rho^{k-1}$, where $K$ is a positive constant and $\rho \in [0,1)$.
\end{theorem}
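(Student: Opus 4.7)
The plan is to combine the exponential convergence of the $n$-step transition probabilities $p^{(n)}_{t;wv}$ from Theorem~\ref{ergodic theorem Mr} with the conditional independence structure of the Markov renewal process. Since $\varphi$-mixing requires a stationary sequence, I would first take $\hat{V}_0$ to be distributed according to the invariant measure $\pi$, so that $\{(\hat{V}_{k-1},\xi_k)\}_{k\geq 1}$ is stationary. It then suffices to prove $\varphi$-mixing for this bivariate sequence, since $\sigma(f(\hat{V}_{k-1},\xi_k))\subseteq\sigma(\hat{V}_{k-1},\xi_k)$ and the supremum over a smaller class of events can only decrease $\varphi_n$.

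Fix $k,n\in\mathbb{N}$ and take $A\in\sigma((\hat{V}_0,\xi_1),\dots,(\hat{V}_{k-1},\xi_k))$ with $\mathsf{P}(A)>0$ and $B\in\sigma((\hat{V}_{k+n-1},\xi_{k+n}),(\hat{V}_{k+n},\xi_{k+n+1}),\dots)$. Iterating \eqref{Markov renewal property} shows that the tail $\{(\hat{V}_j,\xi_{j+1})\}_{j\geq k+n-1}$ is conditionally independent of the past given $\hat{V}_{k+n-1}$, so
\begin{equation*}
\mathsf{P}(B\mid A)=\sum_{v\in\mathcal{V}}\mathsf{P}(B\mid\hat{V}_{k+n-1}=v)\,\mathsf{P}(\hat{V}_{k+n-1}=v\mid A),
\end{equation*}
and by stationarity the analogous identity holds for $\mathsf{P}(B)$ with $\pi_v$ in place of $\mathsf{P}(\hat{V}_{k+n-1}=v\mid A)$. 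Subtracting and using $\mathsf{P}(B\mid\hat{V}_{k+n-1}=v)\leq 1$ yields
\begin{equation*}
|\mathsf{P}(B\mid A)-\mathsf{P}(B)|\leq\sum_{v\in\mathcal{V}}\bigl|\mathsf{P}(\hat{V}_{k+n-1}=v\mid A)-\pi_v\bigr|.
\end{equation*}

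It remains to bound each summand uniformly in $A$. From the joint law $\pi_{v_0}\prod_{i=1}^{k}Q_{v_{i-1}v_i}(dt_i)$ of $(\hat{V}_0,\xi_1,\dots,\hat{V}_{k-1},\xi_k,\hat{V}_k)$, the conditional distribution of $\hat{V}_{k+n-1}$ given $(\hat{V}_0,\xi_1,\dots,\hat{V}_{k-1},\xi_k)$ depends on the past only through the pair $(\hat{V}_{k-1},\xi_k)$ and equals $p^{(n)}_{\xi_k;\hat{V}_{k-1},v}$. By the uniform-in-$t$ estimate of Theorem~\ref{ergodic theorem Mr},
\begin{equation*}
\bigl|\mathsf{P}(\hat{V}_{k+n-1}=v\mid A)-\pi_v\bigr|=\bigl|\mathsf{E}[\,p^{(n)}_{\xi_k;\hat{V}_{k-1},v}-\pi_v\mid A]\bigr|\leq 3c\rho^{n-1},
\end{equation*}
uniformly in $A$ and $v$. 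Summing over the finite space $\mathcal{V}$ gives $\varphi_n\leq K\rho^{n-1}$ with $K=3c|\mathcal{V}|$.

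The only delicate point will be the rigorous handling of the continuous conditioning on $\xi_k$ through regular conditional distributions, but the bound in Theorem~\ref{ergodic theorem Mr} is uniform in $t$, so this step essentially reduces to an application of Jensen's inequality. All the genuine probabilistic content is thus borne by the algebraic decomposition above and by the transition-probability estimate already established.
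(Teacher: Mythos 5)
Your argument is correct and follows essentially the same route as the paper: both reduce the $\varphi$-mixing bound to the uniform estimate $|p^{(n)}_{t;wv}-\pi_v|\leq 3c\rho^{n-1}$ of Theorem \ref{ergodic theorem Mr}, applied through the conditional independence of the future from the past given $(\hat{V}_{k-1},\xi_k)$, and both arrive at the same constant $K=3c\#(\mathcal{V})$. The only difference is presentational: the paper verifies the inequality on cylinder sets and then appeals to an extension argument, whereas you condition on $\hat{V}_{k+n-1}$ and treat arbitrary events of the two $\sigma$-fields directly, which sidesteps that extension step.
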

\begin{proof}
By assumptions there exist a stationary distribution $\pi = \{\pi_v; \;\; v \in \mathcal{V}\}$ for the Markov chain $\hat{V}$. If the initial distribution is the stationary one, then clearly the sequence $\{f(\hat{V}_{k-1},\xi_k)\}_{k \geq 1}$ is stationary. Set $A = \{\omega: (\hat{V}_0,\xi_1,...,\hat{V}_{k-1},\xi_k)(\omega) \in \{v_0\}\times I_1 \times...\times\{v_{k-1}\}\times I_k\}$ and  $B = \{\omega :(\hat{V}_{k+n-1},\xi_{k+n},...,\hat{V}_{k+n+r-1},\xi_{k+n+r})(\omega) \in \{w_0\}\times J_1 \times...\times\{w_{k-1}\}\times J_k\}$, for $v_i,w_j \in \mathcal{V}$, and intervals $I_i,J_j \subset [0,+\infty)$, $i=0,...,k-1$, $j = 0,...,r-1$. Then, we have that
\begin{align*}
&|\mathsf{P}\{A \cap B\}-\mathsf{P}\{A\}\mathsf{P}\{B\}| \leq \\
&\int_{I}\int_{J}\pi_{v_0}\prod_{i=1}^{k-1}Q_{v_{i-1}v_{i}}(dt_i)Q_{v_{k-1}\cdot}(dt_{k})\prod_{j=1}^{r-1}Q_{w_{j-1}w_{j}}(ds_{j})Q_{w_{r-1} \cdot }(ds_{r})|p^{(n)}_{t_k;v_{k-1}w_0}-\pi_{w_0}|
\end{align*}
where $I = I_1\times...\times I_k$, $J=J_1\times...\times J_r$. By applying Theorem \ref{ergodic theorem Mr}, it follows that
\begin{align*}
|\mathsf{P}\{A \cap B\}-\mathsf{P}\{A\}\mathsf{P}\{B\}| &\leq
\int_{I}\int_{J}\pi_{v_0}\prod_{i=1}^{k-1}Q_{v_{i-1}v_{i}}(dt_k)Q_{v_{k-1}\cdot}(dt_{k})\prod_{j=1}^{r-1}Q_{w_{j-1}w_{j}}(ds_{j})Q_{w_{r-1} \cdot }(ds_{r})3c\rho^{n-1}\\
&\leq 3c\#(\mathcal{V})\mathsf{P}\{A\}\rho^{n-1}
\end{align*}
where $\#$ denotes the cardinality of a set.
Thus, we can conclude that
\begin{equation}\label{mixing}
|\mathsf{P}\{B|A\}-\mathsf{P}\{B\}| \leq K\rho^{n-1}.
\end{equation}
with $K = 3c\#(\mathcal{V})$. By a classical extension argument, the class of sets $A$ and the class of sets $B$ satisfying \eqref{mixing} can be extended respectively to $\sigma(\hat{V}_0,\xi_1,...,\hat{V}_{k-1},\xi_{k})$ and $\sigma(\hat{V}_{k+n-1},\xi_{k+n},...)$, from which we have the thesis.
\end{proof}
Theorem \ref{Regenerative property} implies that the sequence $\{\xi_k\}_{k\geq1}$ is delayed regenerative (it suffices to take the projection onto the second component). We use this property to prove a basic renewal-type theorem for $N$, the counting process associated to the Markov renewal process $(\hat{V},S)$.
\begin{theorem}\label{renewal theorem}
Let $(\hat{V},S)$ be a Markov renewal process on a finite state space $\mathcal{V}$ and assume that the Markov chain $\hat{V}$ is irreducible and that $\mathsf{E}[S^2_1|\hat{V}_0 = v,V_1 = w]<+\infty$ for every $v,w \in \mathcal{V}$. For $v_0 \in \mathcal{V}$, set with
\begin{equation}
\mu = \frac{\mathsf{E}[\sum_{k=1}^{\tau_1}\xi_k|\hat{V}_0 = v_0]}{\mathsf{E}[\tau_1|\hat{V}_0 = v_0]}.
\end{equation}
where $\tau_1 = \inf\{k>1: \hat{V}_k = v_0\}$. Then, for every $T>0$,
\begin{equation}
\lim_{n \to +\infty}\mathsf{P}\biggl\{\sup_{t \in [0,T]}\biggl|\frac{N(n t)}{n}-\frac{t}{\mu}\biggr| \geq \epsilon \biggr\} = 0,\ \;\; \epsilon >0,
\end{equation}
\end{theorem}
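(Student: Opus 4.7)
The strategy is to reduce the statement to an almost sure law of large numbers for the cumulated sojourn times $S_n$, obtained from Theorem \ref{weak law regenerative processes}, and then to transfer this into a uniform convergence statement for the counting process $N$ via the monotonicity of sample paths.

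\textbf{Step 1 (LLN for $S_n$).} By Theorem \ref{Regenerative property}, projecting onto the second coordinate, the sequence $\{\xi_k\}_{k\geq 1}$ is delayed regenerative with respect to the successive hitting times $\{\tau_m\}_{m\geq 0}$ of $v_0$. To invoke Theorem \ref{weak law regenerative processes}, it suffices to verify that $\mathsf{E}[\tau_m]<+\infty$ and $\mathsf{E}[\sum_{k=\tau_{m-1}+1}^{\tau_m}\xi_k]<+\infty$ for every $m\in\mathbb{N}$. The first follows from the exponential bound \eqref{finite moments}; the second from the fact that the finite state space and the assumption $\mathsf{E}[S_1^2|\hat{V}_0=v,\hat{V}_1=w]<+\infty$ yield a uniform upper bound $C$ on the conditional mean of $\xi_1$, so that, conditioning on the chain trajectory, $\mathsf{E}[\sum_{k=\tau_{m-1}+1}^{\tau_m}\xi_k]\leq C\,\mathsf{E}[\tau_m-\tau_{m-1}]<+\infty$. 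Theorem \ref{weak law regenerative processes} then gives
\begin{equation*}
\lim_{n\to+\infty}\frac{S_n}{n}=\frac{\mathsf{E}[\sum_{k=\tau_1+1}^{\tau_2}\xi_k]}{\mathsf{E}[\tau_2-\tau_1]}=\mu\quad\text{a.s.},
\end{equation*}
where the last identification uses the regenerative structure together with \eqref{delayed reg2} to rewrite the second cycle mean as the $\hat{V}_0=v_0$ conditioned one appearing in the statement.

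\textbf{Step 2 (Pointwise convergence of $N$).} Since $\xi_k>0$ and $S_n\to+\infty$ a.s.\ by Step 1, we have $N(t)\to+\infty$ a.s.\ as $t\to+\infty$. Combining the sandwich $S_{N(t)}\leq t<S_{N(t)+1}$ with $S_n/n\to\mu$ yields in the usual fashion $N(t)/t\to 1/\mu$ a.s., hence for every fixed $s\geq 0$,
\begin{equation*}
\frac{N(ns)}{n}\longrightarrow\frac{s}{\mu}\quad\text{a.s. as }n\to+\infty.
\end{equation*}

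\textbf{Step 3 (Uniform upgrade).} Fix $\epsilon>0$ and choose a partition $0=t_0<t_1<\cdots<t_K=T$ of mesh strictly less than $\mu\epsilon/2$. For $t\in[t_i,t_{i+1}]$, the monotonicity of $t\mapsto N(nt)/n$ and of $t\mapsto t/\mu$ implies
\begin{equation*}
\frac{N(nt_i)}{n}-\frac{t_i}{\mu}-\frac{\epsilon}{2}\;\leq\;\frac{N(nt)}{n}-\frac{t}{\mu}\;\leq\;\frac{N(nt_{i+1})}{n}-\frac{t_{i+1}}{\mu}+\frac{\epsilon}{2},
\end{equation*}
so that $\sup_{t\in[0,T]}|N(nt)/n-t/\mu|\leq \max_{0\leq i\leq K}|N(nt_i)/n-t_i/\mu|+\epsilon/2$. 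The maximum on the right is over finitely many indices, each of which converges to $0$ in probability by Step 2; thus $\mathsf{P}\{\sup_{t\in[0,T]}|N(nt)/n-t/\mu|\geq\epsilon\}\to 0$ as $n\to+\infty$, which is the thesis.

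\textbf{Main obstacle.} The uniformization in Step 3 is a standard Pólya-type trick for monotone sequences converging to a continuous limit, and Step 2 is a classical inversion. The non-trivial ingredient is really Step 1: one must combine the exponential return-time bound \eqref{finite moments} coming from irreducibility on a finite state space with the conditional moment hypothesis on $S_1$ to secure the integrability conditions of Theorem \ref{weak law regenerative processes}, and then recognize the resulting cycle-average limit as the $v_0$-conditioned mean $\mu$ of the statement through the delayed regenerative identity \eqref{delayed reg2}.
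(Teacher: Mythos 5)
Your proof is correct, but it takes a genuinely different route from the paper. The paper deduces the uniform law of large numbers for $N$ as a by-product of a functional \emph{central limit theorem}: it verifies the second-moment condition $\mathsf{E}[(\sum_{k=\tau_{m-1}+1}^{\tau_m}\xi_k)^2]<+\infty$ via a lengthy conditional computation, applies Theorem \ref{Clt regenerative processes} to get $\bar{\xi}_n\Rightarrow W$ in $D[0,T]$, inverts this through Theorem 14.6 of Billingsley to obtain a functional CLT for the normalized counting process $\bar{N}_n$, and finally divides by $\sqrt{n}$ and uses the continuous mapping theorem to conclude that $\sup_{t\in[0,T]}|N(nt)/n-t/\mu|\Rightarrow 0$. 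You instead invoke only the strong law for regenerative sequences (Theorem \ref{weak law regenerative processes}), which requires just the first-moment integrability of the cycle sums, pass from $S_n/n\to\mu$ a.s.\ to $N(t)/t\to 1/\mu$ a.s.\ by the classical sandwich $S_{N(t)}\leq t<S_{N(t)+1}$, and upgrade to uniformity on $[0,T]$ by the P\'olya/Dini argument for monotone functions converging to a continuous limit. Both identify the cycle-average limit with the stated $\mu$ through \eqref{delayed reg2} in the same way. Your argument is more elementary and in fact stronger in two respects: it yields almost sure uniform convergence rather than convergence in probability, and it would go through assuming only $\mathsf{E}[S_1|\hat{V}_0=v,\hat{V}_1=w]<+\infty$ rather than finiteness of the second moments. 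What the paper's heavier route buys is the intermediate functional CLT for $N$ and the second-moment estimate \eqref{assumption reg}, both of which are reused later (notably in Theorems \ref{Th X hat} and \ref{main Theorem}), so the extra work is amortized elsewhere in the paper rather than wasted.
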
 
\begin{proof}
By Theorem \ref{Regenerative property}, the sequence of random variable $\{\xi_k\}_{k\geq 0}$ is delayed regenerative with respect to the sequence of stopping times $\{\tau_m\}_{m\geq 0}$, $\tau_0 = 0$, defined in \eqref{successive passage times}. The irreducibility of the chain and the finiteness of the state space $\mathcal{V}$ imply through \eqref{finite moments} that $\mathsf{E}[\tau^2_m]<+\infty$ and $\mathsf{E}[\tau_m-\tau_{m-1}] = \mathsf{E}[\tau_1|\hat{V}_0 = v_0]$ for every $m \in \mathbb{N}$. Furthermore, by means of \eqref{Xi cond indep MC} and \eqref{delayed reg2}, we can write, for every $m \in \mathbb{N}$,
\begin{align}\label{assumption reg}
\mathsf{E}\biggl[\biggl(\sum_{k=\tau_{m-1}+1}^{\tau_{m}}\xi_k\biggr)^2\biggr] &= \mathsf{E}\biggl[\biggl(\sum_{k=1}^{\tau_{1}}\xi_k\biggr)^2\bigg|\hat{V}_0 = v_0\biggr] \nonumber\\
&= \sum_{n\geq 1}\sum_{\substack{v_1,...,v_{n-1} \\ \in\mathcal{V}\setminus \{v_0\}}}\mathsf{P}\{\hat{V}_1=v_1,...,\hat{V}_n = v_0|\hat{V}_0 = v_0\}\mathsf{E}\biggl[\sum_{k=1}^{n}\xi_k^2+\sum_{k\neq k'}\xi_k\xi_{k'}\bigg|\hat{V}_0 = v_0,...,\hat{V}_n = v_0\biggr]\nonumber\\
&=\sum_{n\geq 1}\sum_{\substack{v_1,...,v_{n-1} \\ \in\mathcal{V}\setminus \{v_0\}}}\mathsf{P}\{\hat{V}_1=v_1,...,\hat{V}_n = v_0|\hat{V}_0 = v_0\}\biggl\{\sum_{k=1}^{n}\mathsf{E}[\xi_1^2|\hat{V}_1 = v_{k},\hat{V}_0 = v_{k-1}]\nonumber\\
&\ \ +\sum_{k\neq k'}\mathsf{E}[\xi_1^2|\hat{V}_1 = v_{k},\hat{V}_0 = v_{k-1}]\mathsf{E}[\xi_1^2|\hat{V}_1 = v_{k'},\hat{V}_0 = v_{k'-1}]\biggr\}\nonumber\\
&\leq\max_{v,v' \in \mathcal{V}}\mathsf{E}[\xi^2_1|\hat{V}_1 = v',\hat{V}_0 = v]\sum_{n\geq1}n\mathsf{P}\{\tau_1=n|\hat{V}_0 = v_0\}\nonumber\\
&\ \ +\max_{v,v' \in \mathcal{V}}(\mathsf{E}[\xi_1|\hat{V}_1 = v',\hat{V}_0 = v])^2\sum_{n\geq1}n(n-1)\mathsf{P}[\tau_1=n|\hat{V}_0 = v_0]\nonumber\\
&\leq \max_{v,v' \in \mathcal{V}}\mathsf{E}[\xi^2_1|\hat{V}_1 = v',\hat{V}_0 = v]\mathsf{E}[\tau^2_1|\hat{V}_0 = v_0],
\end{align}
where we set in the notation above $v_n = v_0$. Thus, $\mathsf{E}[(\sum_{k=\tau_{m-1}+1}^{\tau_{m}}\xi_k)^2]<+\infty$ for every $m \in \mathbb{N}_0$. Now, we can apply Theorem \ref{Clt regenerative processes} and it follows that the process $\bar{\xi}_\lambda$ defined by
\begin{equation*}
\bar{\xi}_n(t) = \frac{1}{\sqrt{n}\sigma}\sum_{k=1}^{\lfloor n t\rfloor}(\xi_k-\mu),\ \;\; t \in [0,T]
\end{equation*}
where $\sigma^2 = (\mathsf{E}[\tau_1|\hat{V}_0 = v_0])^{-1}\mathsf{E}[(\sum_{k=1}^{\tau_1}(\xi_k-\mu))^2|\hat{V}_0 = v_0]$, satisfies $\bar{\xi}_n \Rightarrow W$ in $D[0,T]$. Now, by Theorem 14.6 of Billingsley \cite{Billingsley}, the normalized counting process $\bar{N}_n$,
\begin{align*}
\bar{N}_n(t) = \frac{N(n t)-\mu^{-1}n t}{\sigma\mu^{-\frac{3}{2}}\sqrt{n}},\ \;\; t \in [0,T],
\end{align*}
satisfies $\bar{N}_n \Rightarrow W$ on $D[0,T]$. Finally, the continuous mapping theorem leads to
\begin{equation*}
\sup_{t \in [0,T]}\biggl|\frac{1}{\sqrt{n}}\frac{N(n t)-\mu^{-1}n t}{\sigma\mu^{-\frac{3}{2}}\sqrt{n}}\biggr| \Rightarrow 0,
\end{equation*}
and hence the thesis.
\end{proof}
Let us define the residual life of the process $N$ as
\begin{equation}
R(t) = t-S_{N(t)}.
\end{equation}
The next theorem describes the asymptotic behavior of residual life $R$.
\begin{theorem}\label{Residual life}
Let $(\hat{V},S)$ a Markov renewal process on a finite state space $\mathcal{V}$ and assume that the Markov chain $\hat{V}$ is irreducible and that $\mathsf{E}[S^2_1|\hat{V}_0 = v,V_1 = w]<+\infty$ for every $v,w \in \mathcal{V}$. Then 
\begin{equation}
\lim_{n\to +\infty}\mathsf{P}\biggl\{\sup_{t \in [0,T]}\frac{R(nt)}{\sqrt{n}}>\epsilon\biggr\} = 0,\ \;\; \epsilon>0.
\end{equation}
\end{theorem}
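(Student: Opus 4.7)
The plan is to reduce the uniform supremum to a maximum of inter-arrival times $\xi_k$ up to a random index $N(nT)+1$, truncate that index at a deterministic level via Theorem \ref{renewal theorem}, and close with a union bound that uses the finiteness of the state space $\mathcal{V}$. The path-wise ingredient is immediate: because $R$ grows linearly with slope $1$ on each inter-arrival interval $[S_{k-1},S_k)$ and jumps back to $0$ at $S_k$, its supremum on that interval equals the left-limit $\xi_k$; since $\sup_{t\in[0,T]}R(nt)=\sup_{s\in[0,nT]}R(s)$, I would first establish
\[
\sup_{t\in[0,T]}R(nt)\;\leq\;\max_{k=1,\ldots,N(nT)+1}\xi_k,
\]
which reduces the claim to showing that this randomly-indexed maximum, divided by $\sqrt{n}$, vanishes in probability.

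For the truncation step, I would fix any $L>T/\mu$, with $\mu$ as in Theorem \ref{renewal theorem}, and invoke that theorem to obtain $\mathsf{P}\{N(nT)+1>Ln\}\to 0$. The event of interest then satisfies
\[
\Bigl\{\max_{k\leq N(nT)+1}\xi_k>\epsilon\sqrt{n}\Bigr\}\;\subset\;\Bigl\{\max_{k\leq Ln}\xi_k>\epsilon\sqrt{n}\Bigr\}\cup\bigl\{N(nT)+1>Ln\bigr\},
\]
so only the deterministic-index maximum remains to control.

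The main obstacle is that the $\xi_k$ are not identically distributed, so Lemma \ref{lemma 3} does not apply off-the-shelf. The workaround I have in mind is that, for every $k$, the marginal of $\xi_k$ is a mixture of the finitely many conditional laws $F_{vw}$, $v,w\in\mathcal{V}$; by time-homogeneity and Chebyshev's inequality,
\[
\mathsf{P}\{\xi_k>\epsilon\sqrt{n}\}\;\leq\;\max_{v,w\in\mathcal{V}}\frac{\mathsf{E}[\xi_1^2\mathsf{1}_{\{\xi_1>\epsilon\sqrt{n}\}}\mid\hat{V}_0=v,\hat{V}_1=w]}{\epsilon^2 n}.
\]
A union bound over $k\leq Ln$ then yields
\[
\mathsf{P}\Bigl\{\max_{k\leq Ln}\xi_k>\epsilon\sqrt{n}\Bigr\}\;\leq\;\frac{L}{\epsilon^2}\max_{v,w\in\mathcal{V}}\mathsf{E}\bigl[\xi_1^2\mathsf{1}_{\{\xi_1>\epsilon\sqrt{n}\}}\mid\hat{V}_0=v,\hat{V}_1=w\bigr],
\]
and each of the finitely many conditional tails on the right vanishes by dominated convergence, using the hypothesis $\mathsf{E}[\xi_1^2\mid\hat{V}_0=v,\hat{V}_1=w]<+\infty$. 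Combined with the truncation step, this yields the claim.
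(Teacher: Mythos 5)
Your proposal is correct, and its skeleton coincides with the paper's: both reduce $\sup_{t\in[0,T]}R(nt)$ pathwise to the maximum of the inter-arrival times $\xi_k$ over indices up to $N(nT)+1$, and both then kill that maximum after dividing by $\sqrt{n}$ using the second-moment hypothesis together with Theorem \ref{renewal theorem} to control the random index. The one genuine difference is in how the maximum is handled. The paper invokes Lemma \ref{lemma 3} directly, justifying its identical-distribution hypothesis by asserting that under the stationary initial law the $\xi_k$ are identically distributed with finite second moment; as stated, the theorem makes no assumption on the initial distribution, so this step is a little loose. You instead inline the content of Lemma \ref{lemma 3} and adapt it to the non-identically-distributed case: since each $\xi_k$ is a finite mixture of the conditional laws $F_{vw}$, the Chebyshev-type tail bound can be taken uniformly over the finitely many pairs $(v,w)$, and the union bound over $k\leq Ln$ then closes with dominated convergence applied to each conditional truncated second moment. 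This buys you a proof valid for an arbitrary initial distribution at the cost of not reusing the stated lemma verbatim; the explicit truncation of the random index at $Ln$ with $L>T/\mu$ is also exactly what the application of Lemma \ref{lemma 3} does internally, so nothing is lost there. In short: same route, but your treatment of the (non-)identical distribution of the $\xi_k$ is the more careful of the two.
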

\begin{proof}
By definition of $N$, we have the following inequality
\begin{equation*}\label{xi random index divided by sqrt lambda goes to 0}
\sup_{t \in [0,T]}\frac{n t-S_{N(n t)}}{\sqrt{n}} \leq \sup_{t \in [0,T]}\frac{\xi_{N(n t)+1}}{\sqrt{n}} = \max_{k = 1,...,N(n T)}\frac{\xi_{k+1}}{\sqrt{n}}.
\end{equation*}
By assumptions there exist a stationary distribution for the Markov chain $\hat{V}$ such that the random variable forming the sequence $\{\xi_k\}_{k\geq 1}$ have the same distribution and finite second moment. Therefore, we can apply Lemma \ref{lemma 3} and by Theorem \ref{renewal theorem} we obtain the thesis.
\end{proof}
In the next theorem is provided a Wald's type identity related to the sum $\sum_{k=1}^{\tau_1}f(\hat{V}_{k-1},S_k-S_{k-1})$, where $\tau_1 = \inf\{k>1: \hat{V}_k = v_0\}$, for some $v_0 \in \mathcal{V}$.
\begin{theorem}\label{prop 1}
Let $(\hat{V},S)$ be a Markov renewal process on a finite state space $\mathcal{V}$ and assume that the Markov chain $\hat{V}$ is irreducible and aperiodic with invariant distribution $\pi$.  Let $f:E\to \mathbb{R}$ be any $\mathcal{B}(E)$-measurable function such that, for every $v,w \in \mathcal{V}$,
\begin{equation}\label{assumption prop 1}
\int_{0}^{+\infty}|f(v,x)|F_{vw}(dx) < +\infty.
\end{equation}
Then, 
\begin{equation}
\mathsf{E}\bigg[\sum_{k=1}^{\tau_1}f(\hat{V}_{k-1},S_k-S_{k-1})\bigg|\hat{V}_0 = v_0\bigg] = \mathsf{E}[\tau_1|\hat{V}_0 = v_0]\mathsf{E}_\pi[f(\hat{V}_0,S_1)].
\end{equation}    
\end{theorem}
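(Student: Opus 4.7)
The plan is to extract the identity by comparing two different almost-sure limits of the same empirical average $n^{-1}\sum_{k=1}^n f(\hat{V}_{k-1},\xi_k)$: one coming from the ergodic theorem (Theorem \ref{ergodic theorem}) and the other from the law of large numbers for regenerative sequences (Theorem \ref{weak law regenerative processes}) applied through the regenerative structure established in Theorem \ref{Regenerative property}. Since both limits exist almost surely, they must coincide, and this coincidence is precisely the claimed identity after identifying the constant $\mathsf{E}[\tau_2 - \tau_1]$ with $\mathsf{E}[\tau_1|\hat{V}_0 = v_0]$ via \eqref{delayed reg2}.

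Concretely, I would proceed as follows. First, I would verify the hypotheses of Theorem \ref{ergodic theorem}: expanding $Q_{v\cdot}(dx) = \sum_{w \in \mathcal{V}} p_{vw} F_{vw}(dx)$ and using the finiteness of $\mathcal{V}$ together with assumption \eqref{assumption prop 1} yields
\begin{equation*}
\sum_{v \in \mathcal{V}}\int_0^{+\infty}|f(v,x)|\pi_v Q_{v\cdot}(dx) = \sum_{v,w \in \mathcal{V}}\pi_v p_{vw}\int_0^{+\infty}|f(v,x)|F_{vw}(dx) < +\infty,
\end{equation*}
so Theorem \ref{ergodic theorem} produces the limit $\mathsf{E}_\pi[f(\hat{V}_0,S_1)]$ almost surely. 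Next, Theorem \ref{Regenerative property} shows that $\{f(\hat{V}_{k-1},\xi_k)\}_{k \geq 1}$ is delayed regenerative with regeneration epochs $\{\tau_m\}_{m \geq 0}$, and the finite state space together with irreducibility yield through \eqref{finite moments} that $\mathsf{E}[\tau_m] < +\infty$ for every $m$.

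To apply Theorem \ref{weak law regenerative processes}, I would check condition \eqref{assumption regslln} by mimicking the conditioning argument used in \eqref{assumption reg}: using \eqref{Xi cond indep MC} and \eqref{delayed reg2},
\begin{equation*}
\mathsf{E}\biggl[\sum_{k=\tau_{m-1}+1}^{\tau_m}|f(\hat{V}_{k-1},\xi_k)|\biggr] = \mathsf{E}\biggl[\sum_{k=1}^{\tau_1}|f(\hat{V}_{k-1},\xi_k)|\bigg|\hat{V}_0 = v_0\biggr] \leq M\,\mathsf{E}[\tau_1|\hat{V}_0 = v_0] < +\infty,
\end{equation*}
where $M = \max_{v,w \in \mathcal{V}}\int_0^{+\infty}|f(v,x)|F_{vw}(dx)$, which is finite by \eqref{assumption prop 1} and finiteness of $\mathcal{V}$. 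Theorem \ref{weak law regenerative processes} then yields
\begin{equation*}
\lim_{n \to +\infty}\frac{1}{n}\sum_{k=1}^n f(\hat{V}_{k-1},\xi_k) = \frac{\mathsf{E}\bigl[\sum_{k=\tau_1+1}^{\tau_2}f(\hat{V}_{k-1},\xi_k)\bigr]}{\mathsf{E}[\tau_2 - \tau_1]} \quad a.s.
\end{equation*}

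Finally, I would identify the right-hand side with the target expression by invoking \eqref{delayed reg2} with $m = 1$, which gives $\mathsf{E}[\sum_{k=\tau_1+1}^{\tau_2}f(\hat{V}_{k-1},\xi_k)] = \mathsf{E}[\sum_{k=1}^{\tau_1}f(\hat{V}_{k-1},\xi_k)|\hat{V}_0 = v_0]$ and, in particular (choosing $f \equiv 1$ or directly from the renewal structure), $\mathsf{E}[\tau_2 - \tau_1] = \mathsf{E}[\tau_1|\hat{V}_0 = v_0]$. Equating the two almost-sure limits and rearranging yields the claimed Wald-type identity. The only subtlety I anticipate is verifying the integrability condition \eqref{assumption regslln}; once that is handled by the bound above, the rest is a direct comparison of two limit theorems.
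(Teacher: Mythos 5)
Your proposal is correct and follows essentially the same route as the paper: establish integrability so that both Theorem \ref{weak law regenerative processes} (via the regenerative structure from Theorem \ref{Regenerative property} and the bound obtained by conditioning on the chain's path) and Theorem \ref{ergodic theorem} apply to the empirical average, then equate the two almost-sure constant limits, using \eqref{delayed reg2} to rewrite the cycle expectations as expectations conditioned on $\hat{V}_0 = v_0$. The only point the paper makes explicit that you leave implicit is that the comparison is carried out with $\hat{V}_0$ distributed according to $\pi$, so that both limit theorems hold on the same probability space.
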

\begin{proof}
From Theorem \ref{Regenerative property}, the sequence $\{f(\hat{V}_{k-1},S_k-S_{k-1})\}_{k \geq 1}$ is delayed regenerative with respect to the sequence of stopping times $\{\tau_m\}_{m\geq 0}$, $\tau_0 = 0$, defined in \eqref{successive passage times}. The irreducibility of the chain and the finiteness of the state space $\mathcal{V}$ implies that $\mathsf{E}[\tau_m]<+\infty$ and $\mathsf{E}[\tau_m-\tau_{m-1}] = \mathsf{E}[\tau_1|\hat{V}_0 = v_0]$ for every $m \in \mathbb{N}$. From equation \eqref{delayed reg2} we have, for every $m \in \mathbb{N}$,
\begin{align*}
\mathsf{E}\bigg[\sum_{k=\tau_{m-1}+1}^{\tau_m}&|f(\hat{V}_{k-1},S_k-S_{k-1})|\bigg] = \mathsf{E}\biggl[\sum_{k=1}^{\tau_1}|f(\hat{V}_{k-1},\xi_k)|\bigg|\hat{V}_0 = v_0\biggr] \\
&=\sum_{n\geq 1}\sum_{\substack{v_1,...,v_{n-1} \\ \in\mathcal{V}\setminus v_0}}\mathsf{P}\{\hat{V}_1=v_1,...,\hat{V}_n = v_0|\hat{V}_0 = v_0\}\mathsf{E}\biggl[\sum_{k=1}^{n}|f(\hat{V}_{k-1},\xi_k)|\bigg|\hat{V}_0 = v_0,...,\hat{V}_n = v_0\biggr] \\
&=\sum_{n\geq 1}\sum_{\substack{v_1,...,v_{n-1} \\ \in\mathcal{V}\setminus v_0}}\mathsf{P}\{\hat{V}_1=v_1,...,\hat{V}_n = v_0|\hat{V}_0 = v_0\}\sum_{k=1}^n\mathsf{E}[|f(\hat{V}_{0},\xi_1)||\hat{V}_0 = v_{k-1},\hat{V}_1 = v_k] \\
&\leq \mathsf{E}[\tau_1|\hat{V}_0 = v_0]\max_{v,w \in \mathcal{V}}(\mathsf{E}[|f(\hat{V}_{0},\xi_1)||\hat{V}_1 = w,\hat{V}_0 = v]) <+\infty,
\end{align*} 
where we set $v_n = v_0$ and used \eqref{Xi cond indep MC}. Hence, by Theorem \ref{weak law regenerative processes} 
\begin{equation*}
\lim_{n\to+\infty}\frac{1}{n}\sum_{k=1}^nf(\hat{V}_{k-1},S_k-S_{k-1}) = \frac{\mathsf{E}[\sum_{k=1}^{\tau_1}f(\hat{V}_{k-1},S_k-S_{k-1})|\hat{V}_0 = v_0]}{\mathsf{E}[\tau_1|\hat{V}_0 = v_0]} \;\; a.s.
\end{equation*}
Moreover, the sequence $\{f(\hat{V}_{k-1},S_k-S_{k-1})\}_{k\geq 1}$ is also stationary if $\hat{V}_0$ has distribution given by $\pi$. The finiteness of $\mathcal{V}$ implies that \eqref{assumption prop 1} is equivalent to
\begin{equation*}
\sum_{v \in \mathcal{V}}\int_{0}^{+\infty}|f(v,x)|\pi_v Q_{v\cdot}(dx) < +\infty.
\end{equation*}
Therefore, from Theorem \ref{ergodic theorem} it follows that 
\begin{equation*}
\lim_{n\to+\infty}\frac{1}{n}\sum_{k=1}^nf(\hat{V}_{k-1},S_k-S_{k-1}) = \mathsf{E}_\pi[f(\hat{V}_0,S_1)] \;\; a.s.
\end{equation*}
from which we have the thesis.
\end{proof}
We now consider the continuous time process associated to the Markov renewal sequence.
\begin{definition}
The stochastic process $V = (V(t))_{t \geq 0}$ defined by
\begin{equation}
V(t) = 
\begin{cases}
\hat{V}_k &\textit{if}\;\; S_k \leq t < S_{k+1},\\
\Delta &\textit{if}\;\; t \geq \sup_k S_k
\end{cases}
\end{equation}
where $\Delta$ is a point not in $\mathcal{V}$, is known as the semi-Markov process associated with the Markov renewal process $(\hat{V},S)$.
\end{definition}
If we consider a finite state space $\mathcal{V}$, it holds that $\sup_k S_k = +\infty$ (see \cite{Cinlar} p. 327). Then, the semi-Markov process $V$ can be expressed as $V(t) = \hat{V}_{N(t)}$, that is more explicitly,
\begin{equation}
V(t) = \sum_{k\geq1}\hat{V}_{k-1}\mathsf{1}_{[S_{k-1},S_k)}(t) = \sum_{k\geq1}\hat{V}_{k-1}\mathsf{1}_{\{k-1\}}(N(t)).
\end{equation}

In the next we will use the following theorem (see \cite{Cinlar} Theorem (5.22)) that describes the asymptotic behavior of the law of a semi-Markov process $V$.
\begin{theorem}\label{Theorem Cinlar}
Let $V$ be the semi-Markov process associated with the Markov renewal process $(\hat{V},S)$. If the embedded Markov chain $\{\hat{V}_k\}_{k\geq 0}$ is irreducible positive recurrent and aperiodic. Then, for any $v,w \in \mathcal{V}$,
\begin{equation}
\lim_{t \to+\infty}\mathsf{P}\{V(t) = v|V(0) = w\} = \frac{\pi_v \mathsf{E}[S_1|\hat{V}_0 = v]}{\sum_{w\in \mathcal{V}} \pi_w \mathsf{E}[S_1|\hat{V}_0 = w]}
\end{equation}
where $\pi$ is the stationary measure of $\{\hat{V}_k\}_{k\geq 0}$.
\end{theorem}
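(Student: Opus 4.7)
The plan is to exploit the regenerative structure of $V$ inherited from the Markov renewal process and then apply a renewal-type limit theorem. Fix the target state $v \in \mathcal{V}$ and take $v_0 := v$ in the definition of the successive passage times $\tau_m$. By Theorem \ref{Regenerative property}, applied with the projection $f(v',x)=x$, the blocks $\{\xi_{\tau_m+1},\ldots,\xi_{\tau_{m+1}}\}$ (together with the accompanying sub-chain of $\hat V$) are i.i.d.\ after an initial delay. This lifts to continuous time: the random epochs $T_m := S_{\tau_m}$ are regeneration times for $V$ in the sense of Definition \ref{Regenerative 2}, since the strong Markov renewal property \eqref{Strong Markov renewal property} guarantees that $(V(T_m+t))_{t\ge 0}$ has the same distribution as $V$ started from $\hat V_0=v$ and is independent of the past.

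Next I invoke the classical regenerative limit theorem (Smith's version of the key renewal theorem): provided the cycle length $T_1$ under $\hat V_0=v$ has a non-lattice distribution and finite mean, then for any $w\in\mathcal{V}$,
\[
\lim_{t\to\infty}\mathsf{P}\{V(t)=v\mid V(0)=w\} \;=\; \frac{\mathsf{E}\!\left[\int_{0}^{T_1}\mathsf{1}_{\{V(s)=v\}}\,ds\,\middle|\,\hat V_0=v\right]}{\mathsf{E}[T_1\mid \hat V_0=v]}.
\]
The initial state $w$ does not appear on the right because $V$ is delayed regenerative and the first hitting time of $v$ is almost surely finite, using \eqref{finite moments} for the irreducible finite chain $\hat V$. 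On the numerator: within a cycle starting at $v$, the process $V$ equals $v$ precisely on the initial sojourn $[0,\xi_1)$, since $\hat V_k\neq v$ for $1\le k<\tau_1$; hence the numerator equals $\mathsf{E}[S_1\mid \hat V_0=v]$.

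For the denominator I apply Theorem \ref{prop 1} (the Wald-type identity) to $f(v',x)=x$, which is integrable in the required sense because $\mathsf{E}[S_1\mid \hat V_0,\hat V_1]<+\infty$ on a finite state space:
\[
\mathsf{E}\!\left[T_1\,\middle|\,\hat V_0=v\right] \;=\; \mathsf{E}\!\left[\sum_{k=1}^{\tau_1}\xi_k\,\middle|\,\hat V_0=v\right] \;=\; \mathsf{E}[\tau_1\mid \hat V_0=v]\,\mathsf{E}_{\pi}[S_1].
\]
For a finite irreducible positive recurrent Markov chain the mean return time satisfies $\mathsf{E}[\tau_1\mid \hat V_0=v]=1/\pi_v$, and by definition $\mathsf{E}_\pi[S_1]=\sum_{w'\in\mathcal V}\pi_{w'}\mathsf{E}[S_1\mid \hat V_0=w']$. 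Substituting back gives the stated formula.

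The main technical hurdle is verifying the non-lattice hypothesis of the regenerative limit theorem for the cycle length $T_1$. Aperiodicity of the embedded chain $\hat V$ is not by itself enough; one has to pull it through the semi-Markov kernel $Q$, and the standard ways to do so (some $F_{vw}$ having an absolutely continuous component, or the distribution of some $\xi_k$ being spread out) are implicit in the framework from which this result is quoted and are the true content of the aperiodicity assumption here. Once that verification is in place, the computation above is essentially bookkeeping.
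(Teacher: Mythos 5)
The paper does not actually prove Theorem \ref{Theorem Cinlar}: it is imported from \cite{Cinlar} (Theorem (5.22) there) without proof, so there is no internal argument to compare yours against. Your regenerative route is the standard alternative to \c{C}inlar's proof via the Markov renewal equation and the key renewal theorem, and the bookkeeping is correct: taking $v_0=v$, the process $V$ occupies state $v$ exactly during the first sojourn $[0,S_1)$ of each cycle, so the numerator is $\mathsf{E}[S_1\mid\hat V_0=v]$, while Theorem \ref{prop 1} with $f(v',x)=x$ gives $\mathsf{E}[T_1\mid\hat V_0=v]=\mathsf{E}[\tau_1\mid\hat V_0=v]\,\mathsf{E}_\pi[S_1]=\pi_v^{-1}\sum_{w}\pi_w\mathsf{E}[S_1\mid\hat V_0=w]$, and the ratio is the claimed limit.

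The caveat you raise at the end is not a defect of your argument but a genuine gap in the statement as reproduced in the paper, and it is worth making it concrete: aperiodicity of the embedded chain does not make the cycle length $T_1=S_{\tau_1}$ non-lattice, and without that the limit can fail outright. Take $\mathcal V=\{a,b\}$ with $Q_{ab}(t)=\mathsf{1}_{[1,+\infty)}(t)$, $Q_{ba}(t)=\tfrac12\mathsf{1}_{[1,+\infty)}(t)$, $Q_{bb}(t)=\tfrac12\mathsf{1}_{[2,+\infty)}(t)$: the embedded chain is irreducible and aperiodic, yet starting from $a$ every return to $a$ occurs at an even time and every sojourn in $a$ lasts exactly one unit, so $\mathsf{P}\{V(t)=a\mid V(0)=a\}$ vanishes identically on $\bigcup_n[2n+1,2n+2)$ and stays bounded away from zero on $\bigcup_n[2n,2n+1)$; no limit exists. \c{C}inlar's own hypotheses impose aperiodicity on the Markov renewal process (i.e., on the return-time distributions built from $Q$), not merely on $\hat V$, and any proof, including yours, must invoke that. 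Two smaller points: the tools you cite, \eqref{finite moments} and Theorem \ref{prop 1}, are stated in the paper only for finite $\mathcal V$, whereas Theorem \ref{Theorem Cinlar} is phrased for a positive recurrent, possibly countable, chain (there you would need the cycle identity in its general form and the extra assumption $\mathsf{E}_\pi[S_1]<+\infty$); and the regenerative limit theorem you invoke for the delayed case also needs $\mathsf{P}\{T_1<+\infty\}=1$ from the arbitrary start $w$, which your appeal to irreducibility does cover. Since the paper only ever applies the theorem with $\mathcal V$ finite and, in Section \ref{sezioneConvergenzaIntegraleSemiMarkov} onward, under moment assumptions that make these issues harmless, none of this propagates downstream.
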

Under the hypotheses of Theorem \ref{Theorem Cinlar}, if we also assume that the state space $\mathcal{V}$ is finite, then it follows
\begin{equation}\label{limit of the mean of semi-Markov}
\lim_{t \to +\infty}\mathsf{E}[V(t)] = \frac{\mathsf{E}_\pi[\hat{V}_0S_1]}{\mathsf{E}_\pi[S_1]}.
\end{equation}
\section{Weak convergence of the integral of semi-Markov process}\label{sezioneConvergenzaDeboleIntegraleSemiMarkov}
Let $(\Omega,\mathcal{G},\mathsf{P})$ be a probability space where is defined $(\hat{V},S):\Omega\to E^{\infty}$ a homogeneous Markov renewal process with semi-Markov kernel $Q$, where $E^{\infty}$ is the product space of $E=\mathcal{V}\times[0,+\infty)$, with $\mathcal{V}$ the finite state space of $\hat{V}$. We endow $E$ with the $\sigma$-field generated by the cylinder sets $\mathcal{C}$. We assume that the finite Markov chain $\hat{V}$ is irreducible and aperiodic with unique invariant measure $\pi$. Let $\xi:\Omega \to [0,+\infty)^{\infty}$ defined as $\xi_k = S_k -S_{k-1}$ with $k \in \mathbb{N}_0$, $\xi_0 = 0$. Let $N:\Omega\to D[0,+\infty)$ be the counting process associated to $S$, $N(t) = \max\{k \in \mathbb{N}_0:S_k \leq t\}$ $t\geq 0$, where $D[0,+\infty)$ is the space of cadlag functions defined on $[0,+\infty)$ endowed by the Skorohod topology, and $V:\Omega\to D[0,+\infty)$ the semi-Markov process associated to $(\hat{V},S)$, \textit{i.e.} $V(t) = \hat{V}_{N(t)}$. Let $X:\Omega\to C[0,+\infty)$ be the integral of $V$, where $C[0,+\infty)$ is the space of continuous functions on $[0,+\infty)$ endowed with the topology of the uniform metric,
\begin{equation}\label{integral of V}
X(t) = \int_0^tV(s)ds,\ \;\; t \geq 0.
\end{equation}
The stochastic process $X$ can be thought as the position of a particle moving with velocity $V$. This means that the motion performs displacements of velocity given by $\hat{V}$ for a random amount of time of length determined by $\xi$ that depends on both the current velocity and the following one. Hence, each element of the sequence of random variables $\{\hat{V}_{k-1},\xi_k\}_{k\geq 1}$ contains the information about the velocity of the $k$-th displacement and the duration of the time interval in which the motion moves according to this velocity. 
\\By observing that the time interval $[0,t]$ can be partitioned with respect to the sequence of the arrival times $S$, we obtain the following equivalent representation of $X$ 
\begin{equation}\label{random walk plus interpolation}
X(t)=\sum_{k=1}^{N(t)}\hat{V}_{k-1}(S_{k} -S_{k-1})+\hat{V}_{N(t)}( t- S_{N(t)}),\ \;\; t \geq 0.
\end{equation}

The aim of the present section is to identify conditions under which $X$ admits a weak limit. It turns out that the suitable reparameterization of $V$ is the following. By introducing a parameter $\lambda>0$, we define the scaled counting process $N_\lambda = (N_\lambda (t))_{t\geq 0}$ as
\begin{equation}\label{scaled counting process}
N_\lambda(t) = \max\{k \in \mathbb{N}_0: \lambda^{-1}S_k \leq t\} = N(\lambda t).
\end{equation}
Then, we normalize the semi-Markov process $V(\lambda t)$ associated to $N(\lambda t)$ by defining the new process $V_\lambda = (V_{\lambda}(t))_{t\geq 0}$ as
\begin{equation}\label{normalized semi-markov}
V_{\lambda}(t) = \sqrt{\lambda}(V(\lambda t)-\theta),\ \;\; t \geq 0,
\end{equation}
where, according to \eqref{limit of the mean of semi-Markov}, the parameter $\theta$ is the limit as $\lambda\to+\infty$ of the mean of $V(\lambda t)$
\begin{equation}\label{theta}
\theta  =\lim_{\lambda \to +\infty}\mathsf{E}[V(\lambda t)]=\frac{\mathsf{E}_\pi[\hat{V}_0S_{1}]}{\mathsf{E}_\pi[S_1]}.
\end{equation}
The corresponding integral of the normalized semi-Markov process $V_\lambda$ is denoted as $X_\lambda$ and takes the form
\begin{equation}
X_\lambda(t) = 
\frac{1}{\sqrt{\lambda}} \int_0^{\lambda t} (V(s)-\theta)ds = \frac{1}{\sqrt{\lambda}}(X(\lambda t)-\theta\lambda t),\ \;\; t \geq 0,
\end{equation} 
which can also be expressed in the equivalent form 
\begin{equation}\label{Decomposition fvrm}
X_\lambda(t) = \frac{1}{\sqrt{\lambda}}\sum_{k=1}^{N(\lambda t)} (\hat{V}_{k-1}-\theta)(S_{k} -S_{k-1})+\frac{1}{\sqrt{\lambda}}(\hat{V}_{N(\lambda t)}-\theta)(\lambda t- S_{N(\lambda t)}),\ \;\; t \geq 0.
\end{equation}

The underlying idea of the normalization introduced in equation \eqref{normalized semi-markov} is to ensure that in \eqref{Decomposition fvrm} appears a sum of random variables having null mean (assuming that $\hat{V}_{0}$ has distribution $\pi$) with the classical central limit type scaling. From a physical point of view the above reparameterization guarantees that the number of changes of direction grow at the order of the square-root of the velocity. Indeed, as equation \eqref{scaled counting process} shows, the number of total changes of velocity in a unit time interval increases as $\lambda$. On the other hand, equation \eqref{normalized semi-markov} provides that the normalized velocity grows as $\sqrt{\lambda}$. This is the standard equilibrium condition allowing the convergence of the telegraph equation to the heat equation. Heuristically, this condition has the same meaning to the one that arises in the derivation of Brownian motion as limit of a random walk. Indeed, to obtain Brownian motion, a particle undergoing a random walk must perform a number of displacements that decay at the same order as the square root of their lengths (see \cite{Zauderer} sections 1.1 and 1.2). 

Having introduced the proper normalization, our task is to show the weak convergence of $X_\lambda$ as $\lambda \to +\infty$ to a scaled Brownian motion in the space $C[0,+\infty)$.

In equation \eqref{Decomposition fvrm} we can see that $X_\lambda$ can be decomposed into the sum of a random walk with a random number of summands and an interpolating term, which is proportional to the residual life of the counting process $N_\lambda$. The increase in the number of changes in direction results in arrival times occurring more frequently, which informally means that every instant becomes an arrival time. This behavior is formally proved by Theorem \ref{Residual life}, which shows that, 
\begin{equation*}
\sqrt{\lambda}\bigg(\frac{S_{N(\lambda t)}}{\lambda}-t\bigg) = \frac{S_{N(\lambda t)} -\lambda t}{\sqrt{\lambda}} \Rightarrow 0.
\end{equation*}
In worlds, the difference between the current time $t$ and the instant of the normalized last arrival time $\lambda^{-1}S_{N(\lambda t)}$ approaches to zero faster than $\lambda^{-1/2}$. This result allows us to study the asymptotic law of $X_\lambda(t)$ through the law of $X_\lambda(\lambda^{-1}S_{N(\lambda t)})$, which is a random walk where random number of steps is enumerated by the counting process $N(\lambda t)$. Hence, we define the random function on $D[0,+\infty)$, $Y_\lambda = (Y_\lambda(t))_{t\geq 0}$
\begin{equation}
Y_\lambda(t) =X_\lambda(\lambda^{-1}S_{N(\lambda t)}) =\frac{1}{\sqrt{\lambda}}\sum_{k=1}^{N(\lambda t)} (\hat{V}_{k-1}-\theta)(S_{k} -S_{k-1}),\ \;\; t \geq 0.
\end{equation}
From this consideration we readily obtain the following theorem.
\begin{theorem}\label{th res goes to zero}
Let $(\hat{V},S)$ be a Markov renewal process on a finite state space $\mathcal{V}$ and assume that the Markov chain $\hat{V}$ is irreducible and that $\mathsf{E}[S^2_1|\hat{V}_0 = v,V_1 = w]<+\infty$ for every $v,w \in \mathcal{V}$. Then
\begin{equation}
\lim_{\lambda \to +\infty}\mathsf{P}\{\sup_{t \in [0,T]}|X_\lambda(t)-Y_\lambda(t)| \geq \epsilon \} = 0,\ \;\; \epsilon >0.
\end{equation}
\end{theorem}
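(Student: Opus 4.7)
The plan is to exploit that $X_\lambda$ and $Y_\lambda$ differ only by the interpolating term appearing at the end of the decomposition \eqref{Decomposition fvrm}. Subtracting the two expressions, one gets the pointwise identity
\begin{equation*}
X_\lambda(t)-Y_\lambda(t) \;=\; \frac{1}{\sqrt{\lambda}}\bigl(\hat{V}_{N(\lambda t)}-\theta\bigr)\bigl(\lambda t - S_{N(\lambda t)}\bigr) \;=\; \frac{(\hat{V}_{N(\lambda t)}-\theta)\,R(\lambda t)}{\sqrt{\lambda}},
\end{equation*}
where $R(s)=s-S_{N(s)}$ is the residual-life process introduced just before Theorem \ref{Residual life}. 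This identity is exactly the formal counterpart of the heuristic discussion that precedes the statement, and it is the only real observation needed.

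First I would control the $\hat V$ factor using the standing assumption that $\mathcal V$ is finite: the quantity $M=\max_{v\in\mathcal V}|v-\theta|$ is a finite deterministic constant, and therefore
\begin{equation*}
\sup_{t\in[0,T]}\bigl|X_\lambda(t)-Y_\lambda(t)\bigr|\;\leq\; M\,\sup_{t\in[0,T]}\frac{R(\lambda t)}{\sqrt{\lambda}}.
\end{equation*}
Then, since the hypotheses of Theorem \ref{Residual life} are precisely those we have here (irreducibility of $\hat V$ on a finite state space and the conditional second-moment assumption on $S_1$), I would invoke that theorem to deduce that the right-hand side converges to zero in probability. For any $\epsilon>0$, choosing $\epsilon'=\epsilon/M$ in the conclusion of Theorem \ref{Residual life} then yields
\begin{equation*}
\mathsf{P}\Bigl\{\sup_{t\in[0,T]}|X_\lambda(t)-Y_\lambda(t)|\geq\epsilon\Bigr\}\;\leq\; \mathsf{P}\Bigl\{\sup_{t\in[0,T]}\frac{R(\lambda t)}{\sqrt{\lambda}}\geq\epsilon/M\Bigr\}\;\longrightarrow\;0,
\end{equation*}
which is the thesis.

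There is essentially no obstacle here: the proof reduces to an algebraic identity plus a direct application of an earlier theorem. The only minor point is that Theorem \ref{Residual life} is stated along the integer parameter $n\to+\infty$ while here the limit is taken along the continuous parameter $\lambda\to+\infty$; however, since $R$ is determined by the jump times $S_k$ and the argument of Theorem \ref{Residual life} only uses Lemma \ref{lemma 3} and Theorem \ref{renewal theorem}, both of which extend verbatim to a continuous scaling, this transition is immediate and requires no additional work.
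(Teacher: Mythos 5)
Your proposal is correct and is exactly the argument the paper intends: the paper gives no separate proof, stating that the theorem follows ``readily'' from the identity $X_\lambda(t)-Y_\lambda(t)=\lambda^{-1/2}(\hat V_{N(\lambda t)}-\theta)(\lambda t-S_{N(\lambda t)})$, the boundedness of $\hat V$ on the finite state space $\mathcal V$, and Theorem \ref{Residual life}. Your handling of the integer-versus-continuous scaling parameter is a reasonable (and harmless) extra remark.
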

In light of the interpretation of the random variables $(\hat{V}_{k-1},\xi_k)$, the product $\eta_k = (\hat{V}_{k-1}-\theta)\xi_k$, $k \in \mathbb{N}$ describes the length of the normalized $k$-th displacement. The random variables $\eta_k$ have null mean and variance given by
\begin{align}
&\mathsf{E}_\pi[\eta^2_k] = \sum_{v\in \mathcal{V}}\int_0^{+\infty}(v-\theta)^2t^2\pi_vQ_{v\cdot}(dt)
\end{align}
while the auto-covariance is
\begin{equation}\label{autocovariance}
\mathsf{E}_\pi[\eta_1\eta_{1+k}] = \int_{0}^{+\infty}\int_{0}^{+\infty}\sum_{v\in \mathcal{V}}\sum_{w\in \mathcal{V}}(v-\theta)(w-\theta)st\pi_{v}Q_{v\cdot}(ds)p^{(k)}_{s;vw}Q_{w\cdot}(dt),\ \;\; k \in \mathbb{N}.
\end{equation}
From Theorem \ref{ergodic theorem Mr} and the fact that $\mathsf{E}_{\pi}[\eta_k] = 0$, we have the following estimate of the covariances of $\eta_k$
\begin{align}\label{estimate covariance}
|\mathsf{E}_\pi[\eta_1\eta_{1+k}]| &= \biggl|\int_{0}^{+\infty}\int_{0}^{+\infty}\sum_{v\in \mathcal{V}}\sum_{w\in \mathcal{V}}(v-\theta)(w-\theta)st\pi_{v}Q_{v\cdot}(ds)Q_{w\cdot}(dt)(p^{(k)}_{s;vw}-\pi_v)\biggr| \nonumber\\
&\leq \frac{K}{\min_{v'\in \mathcal{V}}\pi_v'}\rho^{k-1}\int_{0}^{+\infty}\int_{0}^{+\infty}\sum_{v\in \mathcal{V}}\sum_{w\in \mathcal{V}}|(v-\theta)(w-\theta)|st\pi_{v}\pi_wQ_{v\cdot}(ds)Q_{w\cdot}(dt)\nonumber\\
&= \frac{K}{\min_{v'\in \mathcal{V}}\pi_v'}\rho^{k-1}\biggr(\mathsf{E}_\pi[|\hat{V}_0-\theta|\cdot\xi_1]\biggl)^2,\ \;\;k \in \mathbb{N}.
\end{align}
In Lemma \ref{Fundamental lemma}, we have seen that, if the number of summands asymptotically behaves as a deterministic sequence, the random walk with a random number of summands converges weakly to the same limit of the same random walk with a deterministic number of terms. Hence we define the random function $\hat{Y}_\lambda = (\hat{Y}_\lambda(t), t \in [0,T])$ on $D[0,T]$, for some $T>0$,  
\begin{equation}\label{X hat}
\hat{Y}_\lambda(t) = \frac{1}{\sqrt{\lambda}}\sum_{k=1}^{\lfloor \lambda t \rfloor}\eta_k,\ \;\; t \in [0,T].
\end{equation}
The stationarity of $\{\eta_k\}_{k\geq 1}$ implies that the variance of $Y_\lambda$ becomes
\begin{align}\label{var X hat}
\mathsf{E}_\pi[\hat{Y}^2_\lambda(t)] = \frac{\lfloor \lambda t \rfloor}{\lambda}\mathsf{E}_\pi[(\hat{V}_0-\theta)^2\xi_1^2]+2\frac{\lfloor \lambda t \rfloor}{\lambda}\sum_{k=1}^{\lfloor \lambda t \rfloor-1}\biggl(1-\frac{k}{\lfloor \lambda t \rfloor}\biggr)\mathsf{E}_\pi[(\hat{V}_0-\theta)\xi_1(\hat{V}_1-\theta)\xi_{1+k}].
\end{align}
As a consequence of \eqref{estimate covariance}, the Cesaro sum contained in \eqref{var X hat} converges and then 
\begin{equation}
\lim_{\lambda \to +\infty}\mathsf{E}_\pi[\hat{Y}^2_\lambda(t)] = t\gamma^2
\end{equation}
where
\begin{align}\label{asympt var of X hat}
\gamma^2& = \sum_{v\in \mathcal{V}}\int_0^{+\infty}(v-\theta)^2s^2\pi_vQ_{v\cdot}(ds)\nonumber\\
&\ \ +2\sum_{k\geq 1}\sum_{v\in \mathcal{V}}\sum_{w\in \mathcal{V}}\int_{0}^{+\infty}\int_{0}^{+\infty}(v-\theta)(w-\theta)st\pi_{v}Q_{v\cdot}(ds)Q_{w\cdot}(dt)p^{(k)}_{s;vw}.
\end{align}
In the next Theorem we show the weak convergence of $\hat{Y}_\lambda$ to a scaled Brownian motion and we derive an alternative expression of the limiting variance.
\begin{theorem}\label{Th X hat}
Let $(\hat{V},S)$ be a Markov renewal process on a finite state space $\mathcal{V}$ and assume that the Markov chain $\hat{V}$ is irreducible and aperiodic with invariant distribution $\pi$ and that $\mathsf{E}_\pi[S^2_1]<+\infty$. Then, the process $\hat{Y}_\lambda$ defined in \eqref{X hat} satisfies $\hat{Y}_\lambda \Rightarrow \gamma W$ in $D[0,T]$, where $\gamma$ is expressed in \eqref{asympt var of X hat}. Moreover, for every $v_0 \in \mathcal{V}$, it holds that
\begin{align}\label{asympt var of X hat 2}
&\gamma^2 =\pi_{v_0}\mathsf{E}\biggl[\biggl(\sum_{k=1}^{\tau_1}\eta_k\biggr)^{2}\bigg|\hat{V}_0 = v_0\biggr]
\end{align}
where $\tau_1 = \inf\{k\geq1: \hat{V}_k = v_0\}$.
\end{theorem}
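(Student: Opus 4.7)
The plan is to apply the functional central limit theorem for regenerative sequences (Theorem~\ref{Clt regenerative processes}) to $\{\eta_k\}_{k\geq 1}$ and then to identify the resulting limiting variance with both expressions claimed for~$\gamma^2$. First, I would note that Theorem~\ref{Regenerative property} applied to the measurable function $f(v,x) = (v-\theta)x$ shows that $\{\eta_k\}_{k\geq 1}$ is delayed regenerative with respect to the successive passage times $\{\tau_m\}_{m\geq 0}$ through the state $v_0$. Under the invariant law~$\pi$ the sequence is also stationary, and by the very definition of $\theta$ in \eqref{theta} one has $\mathsf{E}_\pi[\eta_1] = \mathsf{E}_\pi[(\hat{V}_0-\theta)S_1] = 0$, so the centering $\mu$ in Theorem~\ref{Clt regenerative processes} is zero.

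Next, I would verify the two moment hypotheses of Theorem~\ref{Clt regenerative processes}. The exponential tail bound \eqref{finite moments} gives $\mathsf{E}[\tau_m^p]<+\infty$ for every $m \in \mathbb{N}$ and every $p\geq 1$. For the cycle-integrability condition \eqref{assumption finite variance}, the finiteness of $\mathcal{V}$ gives $|\eta_k|\leq C\,\xi_k$ with $C = \max_{v\in\mathcal{V}}|v-\theta|$, and therefore
\[
\mathsf{E}\bigg[\bigg(\sum_{k=\tau_{m-1}+1}^{\tau_m}|\eta_k|\bigg)^{\!2}\bigg]\leq C^{2}\,\mathsf{E}\bigg[\bigg(\sum_{k=\tau_{m-1}+1}^{\tau_m}\xi_k\bigg)^{\!2}\bigg],
\]
and the right-hand side is finite by the calculation already carried out in \eqref{assumption reg}, which combines \eqref{finite moments} with the standing assumption $\mathsf{E}_\pi[S_1^2]<+\infty$. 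After the routine extension from $D[0,1]$ to $D[0,T]$, Theorem~\ref{Clt regenerative processes} yields $\hat{Y}_\lambda\Rightarrow \sigma W$ in $D[0,T]$ with $\sigma^2 = \mathsf{E}[(\sum_{k=\tau_1+1}^{\tau_2}\eta_k)^{2}]/\mathsf{E}[\tau_2-\tau_1]$. Using the regenerative identity \eqref{delayed reg2} to rewrite the numerator as a conditional expectation given $\hat{V}_0=v_0$, together with the standard Markov-chain identity $\mathsf{E}[\tau_1\mid \hat{V}_0=v_0]=1/\pi_{v_0}$ for the denominator, produces at once the second expression \eqref{asympt var of X hat 2} for $\gamma^2$.

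The remaining, and in my view most delicate, step is to show that this $\sigma^2$ agrees with the covariance-series expression $\gamma^2$ defined in \eqref{asympt var of X hat}. My plan is to compare $\lim_{\lambda\to+\infty}\mathsf{E}_\pi[\hat{Y}_\lambda^{\,2}(1)]$ computed in two ways: on one side, the discussion preceding the theorem (equations \eqref{var X hat}--\eqref{asympt var of X hat}), based on stationarity and the exponential covariance decay \eqref{estimate covariance}, already shows that this limit equals $\gamma^2$; on the other side, the weak convergence $\hat{Y}_\lambda(1)\Rightarrow \sigma W(1)$ combined with uniform integrability of $\{\hat{Y}_\lambda^{\,2}(1)\}_{\lambda>0}$ forces the limit to equal $\sigma^2$. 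The key obstacle is establishing this uniform integrability; I would either bound a higher moment of $\hat{Y}_\lambda(1)$ via extensions of the covariance estimate \eqref{estimate covariance}, or, bypassing uniform integrability altogether, expand $\mathsf{E}[(\sum_{k=1}^{\tau_1}\eta_k)^2\mid \hat{V}_0=v_0]$ algebraically and use Wald-type identities under the stationary distribution to match it term-by-term with \eqref{asympt var of X hat}. Either route then concludes $\sigma^2=\gamma^2$.
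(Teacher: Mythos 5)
Your regenerative half is essentially the paper's own argument for the second claim: Theorem~\ref{Regenerative property} with $f(v,x)=(v-\theta)x$, the zero centering via $\mathsf{E}_\pi[(\hat{V}_0-\theta)S_1]=0$ (which the paper routes through Theorem~\ref{prop 1}), the moment checks via \eqref{finite moments} and the bound \eqref{assumption reg}, and then Theorem~\ref{Clt regenerative processes} together with $\mathsf{E}[\tau_1\mid\hat{V}_0=v_0]=\pi_{v_0}^{-1}$ to get $\hat{Y}_\lambda\Rightarrow\sigma W$ with $\sigma^2$ equal to the cycle expression \eqref{asympt var of X hat 2}. That part is fine.

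The genuine gap is the step you yourself flag as delicate: identifying this $\sigma^2$ with the covariance series \eqref{asympt var of X hat}. Your first route (uniform integrability of $\hat{Y}_\lambda^2(1)$ via a higher moment bound) cannot work under the stated hypotheses, since only $\mathsf{E}_\pi[S_1^2]<+\infty$ is assumed and $\eta_k=(\hat{V}_{k-1}-\theta)\xi_k$ need not have any moment beyond the second; without uniform integrability, weak convergence only gives the one-sided inequality $\sigma^2\leq\liminf_\lambda\mathsf{E}_\pi[\hat{Y}_\lambda^2(1)]=\gamma^2$ by Fatou. Your second route (expanding the cycle variance term by term into the autocovariance series via Wald-type identities) is a correct statement but is a nontrivial computation that you do not carry out, so as written the proof of \eqref{asympt var of X hat} is incomplete. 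The paper avoids this entirely by proving the weak convergence a \emph{second} time: Theorem~\ref{phi mixing} shows $\{\eta_k\}$ is $\varphi$-mixing with $\varphi_n=K\rho^{n-1}$, and Theorem~19.2 of \cite{Billingsley} (the FCLT for stationary $\varphi$-mixing sequences) then yields $\hat{Y}_\lambda\Rightarrow\gamma W$ with $\gamma^2=\mathsf{E}_\pi[\eta_1^2]+2\sum_{k\geq1}\mathsf{E}_\pi[\eta_1\eta_{1+k}]$ directly, i.e.\ with the covariance-series variance built into the conclusion. Uniqueness of the weak limit then forces the two variance formulas to coincide, with no uniform integrability or algebraic expansion needed. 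If you want to salvage your single-theorem approach, you must actually prove the identity
\begin{equation*}
\pi_{v_0}\mathsf{E}\biggl[\biggl(\sum_{k=1}^{\tau_1}\eta_k\biggr)^{2}\bigg|\hat{V}_0=v_0\biggr]=\mathsf{E}_\pi[\eta_1^2]+2\sum_{k\geq1}\mathsf{E}_\pi[\eta_1\eta_{1+k}],
\end{equation*}
which is precisely the content you are deferring.
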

\begin{proof}
By Theorem \ref{phi mixing} the stationary sequence of random variables $\{\eta_k\}_{k\geq 1}$ is $\varphi$-mixing with parameter $\varphi_n = K\rho^{n-1}$. Therefore, we can apply Theorem 19.2 of \cite{Billingsley} to conclude that $\hat{Y}_\lambda \Rightarrow \gamma W$ in the space $D[0,T]$, where $\gamma^2 = \mathsf{E}_\pi[\eta^2_1]+2\sum_{k\geq1}\mathsf{E}_\pi[\eta_1\eta_{k+1}]$, which coincides with \eqref{asympt var of X hat}.

In order to prove the second statement of the theorem, we use the regenerative property of $\{\hat{V}_{k-1},\xi_k\}_{k\geq 1}$. Fix a velocity $v_0 \in \mathcal{V}$, by Theorem \ref{Regenerative property}, we have that $\{\eta_k\}_{k\geq 1}$ forms a delayed regenerative process with respect to the sequence $\{\tau_m\}_{m\geq 0}$ defined in \eqref{successive passage times}. The irreducibility of the chain and the finiteness of the state space $\mathcal{V}$ implies through \eqref{finite moments} that $\mathsf{E}[\tau^2_m]<+\infty$ and $\mathsf{E}[\tau_m-\tau_{m-1}] = \mathsf{E}[\tau_1|\hat{V}_0 = v_0] = \pi_{v_0}^{-1}$ for every $m \in \mathbb{N}$. Moreover, from Proposition \ref{prop 1}, it holds that
\begin{equation*}
\mathsf{E}\biggl[\sum_{k=1}^{\tau_1}\eta_k\bigg|\hat{V}_0 = v_0\biggr] = \pi_{v_0}^{-1}\mathsf{E}_\pi[\eta_1]=0. 
\end{equation*}
Now, \eqref{delayed reg2} and \eqref{assumption reg} entail that, for every $m \in \mathbb{N}$, 
\begin{align*}
\mathsf{E}\biggl[\biggl(\sum_{k=\tau_{m-1}+1}^{\tau_{m}}|\eta_k|\biggr)^2\biggr]& = \mathsf{E}\biggl[\biggl(\sum_{k=1}^{\tau_{1}}|\eta_k|\biggr)^2\bigg|\hat{V}_0 = v_0\biggr]\\
&\leq \max_{v \in \mathcal{V}}(v-\theta)^2\;\mathsf{E}\biggl[\biggl(\sum_{k=1}^{\tau_{1}}\xi_k\biggr)^2\bigg|\hat{V}_0 = v_0\biggr] \\
&\leq \max_{v \in \mathcal{V}}(v-\theta)^2\max_{v,w \in \mathcal{V}}\mathsf{E}[\xi^2_1|\hat{V}_1 = w,\hat{V}_0 = v]\mathsf{E}[\tau^2_1|\hat{V}_0 = v_0].
\end{align*}
Thus, $\mathsf{E}[(\sum_{k=\tau_{m-1}+1}^{\tau_{m}}|\eta_k|)^2]<+\infty$ for every $m \in \mathbb{N}_0$, from which we can apply Theorem \ref{Clt regenerative processes}, and obtain
\begin{equation*}
\hat{Y}_{\lambda} \Rightarrow \gamma W
\end{equation*}
in $D[0,T]$, where
\begin{equation*}
\gamma^2 = \pi_{v_0}\mathsf{E}\biggl[\biggl(\sum_{k=1}^{\tau_1}\eta_k\biggr)^{2}\bigg|\hat{V}_0 = v_0\biggr].
\end{equation*}
\end{proof}
We are now in the position to prove the main theorem of this paper.
\begin{theorem}\label{main Theorem}
Let $V = (V(t))_{t \geq 0}$ be a semi-Markov process with respect to $(\hat{V},S)$, a Markov renewal process on a finite state space $\mathcal{V}$, where the embedded Markov chain $\hat{V}$ is irreducible and aperiodic with invariant distribution $\pi$. Let us put $\mu=\mathsf{E}_\pi[S_1]$ and assume that $\mathsf{E}_\pi[S^2_1]<+\infty$. Then, the process $X_\lambda = (X_\lambda(t))_{t \geq 0}$ defined by
\begin{equation*}
X_\lambda(t) =\frac{1}{\sqrt{\lambda}} \int_0^{\lambda t}(V(s)-\theta)ds,
\end{equation*}
where $\theta =  \mu^{-1}\mathsf{E}_\pi[\hat{V}_0S_1]$, satisfies the following weak limit in $C[0,+\infty)$ 
\begin{equation}
X_\lambda \Rightarrow \mu^{-1/2}\gamma W
\end{equation}
where $\gamma$ is provided by \eqref{asympt var of X hat} or \eqref{asympt var of X hat 2}.
\end{theorem}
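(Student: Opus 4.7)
The plan is to combine the two main ingredients already established: the uniform approximation of $X_\lambda$ by the partial-sum process $Y_\lambda$ (Theorem \ref{th res goes to zero}) and the invariance principle for the deterministic-index process $\hat{Y}_\lambda$ (Theorem \ref{Th X hat}). Fix an arbitrary $T>0$ and work first in $D[0,T]$; the passage to $C[0,+\infty)$ will follow at the end from the continuity of the sample paths. Setting $\eta_k = (\hat{V}_{k-1}-\theta)\xi_k$, I would decompose $X_\lambda = Y_\lambda + (X_\lambda - Y_\lambda)$, where $Y_\lambda(t) = \lambda^{-1/2}\sum_{k=1}^{N(\lambda t)}\eta_k$. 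Since the assumption $\mathsf{E}_\pi[S_1^2]<+\infty$ entails the finiteness of $\mathsf{E}[S_1^2 \mid \hat{V}_0=v,\hat{V}_1=w]$ for every pair of states, Theorem \ref{th res goes to zero} gives $\sup_{t\in[0,T]} |X_\lambda(t)-Y_\lambda(t)| \Rightarrow 0$, so by a Slutsky-type argument in the Skorohod topology it suffices to prove $Y_\lambda \Rightarrow \mu^{-1/2}\gamma W$ in $D[0,T]$.

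For the convergence of $Y_\lambda$ I would invoke Lemma \ref{Fundamental lemma}, identifying $\hat{Y}_\lambda/\gamma$ with the deterministic-index process $X_n$ of the lemma (the required convergence $\hat{Y}_\lambda/\gamma \Rightarrow W$ is exactly Theorem \ref{Th X hat}), and identifying the random family $\nu_{\lambda t} = N(\lambda t)$ with the lemma's $\nu_{nt}$, taking $a_\lambda = \lambda$ and rate $\theta_{\mathrm{lem}} = 1/\mu$. Theorem \ref{renewal theorem} supplies precisely the hypothesis of Lemma \ref{Fundamental lemma}, namely $\sup_{t\in[0,T]}|N(\lambda t)/\lambda - t/\mu| \Rightarrow 0$, after a trivial rescaling from $[0,1]$ to $[0,T]$. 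The lemma therefore yields $\tfrac{\sqrt{\mu}}{\sqrt{\lambda}\gamma}\sum_{k=1}^{N(\lambda t)}\eta_k \Rightarrow W$, which is equivalent to $Y_\lambda \Rightarrow \mu^{-1/2}\gamma W$. Combined with the first step, this gives $X_\lambda \Rightarrow \mu^{-1/2}\gamma W$ in $D[0,T]$.

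Finally I would upgrade the topology. The sample paths of $X_\lambda$ are almost surely continuous, being integrals of the bounded càdlàg process $V(\cdot)-\theta$ over compact intervals, and the limit $\mu^{-1/2}\gamma W$ is continuous as well; therefore Skorohod convergence in $D[0,T]$ together with continuity of the limiting paths upgrades to weak convergence in $C[0,T]$ with the uniform metric. Since $T>0$ is arbitrary, this delivers the claimed convergence in $C[0,+\infty)$ endowed with the locally uniform topology. The main subtlety I expect lies in the bookkeeping of normalizations inside Lemma \ref{Fundamental lemma}: the deterministic-index partial sum $\hat{Y}_\lambda$ is normalized by $\sqrt{\lambda}$ alone, whereas the random-index process in the lemma carries an extra factor $\sqrt{\theta_{\mathrm{lem}}} = \mu^{-1/2}$ in its denominator, and it is precisely this asymmetry that produces the $\mu^{-1/2}$ prefactor in the limiting diffusion coefficient; one must track this factor carefully and not confuse it with the alternative variance identity \eqref{asympt var of X hat 2} provided by the regenerative representation.
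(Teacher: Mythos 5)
Your argument is correct and follows essentially the same route as the paper: reduce to $Y_\lambda$ via Theorem \ref{th res goes to zero}, transfer the invariance principle of Theorem \ref{Th X hat} through the uniform renewal theorem (Theorem \ref{renewal theorem}) by a random time change, and upgrade from $D$ to $C$ using path continuity. The only cosmetic difference is that you invoke Lemma \ref{Fundamental lemma} directly (with the long-run variance $\gamma$ playing the role of the lemma's $\sigma$), whereas the paper composes $\hat{Y}_\lambda$ with a truncated version $\Phi_\lambda$ of $\lambda^{-1}N(\lambda\,\cdot)$ and applies Billingsley's time-change lemma; both devices produce the same $\mu^{-1/2}$ prefactor.
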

\begin{proof}
From Theorem \ref{prop 1} we have that $\mu = (\mathsf{E}[\tau_1|\hat{V}_0 = v_0])^{-1}\mathsf{E}[\sum_{k=1}^{\tau_1}\xi_k|V_0 = v_0]$. Let us assume, without loss of generality, that $\mu > 1$. Then, by Theorem \ref{renewal theorem}, the random function of $D[0,T]$
\begin{equation}
\Phi_{\lambda}(t) = 
\begin{cases}
\frac{N(\lambda t)}{\lambda}, &\frac{N(\lambda T)}{\lambda} \leq T\\
\frac{t}{\mu}, &\frac{N(\lambda T)}{\lambda} > T
\end{cases}
\end{equation}
satisfies $\Phi_{\lambda}\Rightarrow_\lambda \phi$ on $D[0,T]$, where $\phi(t) = \mu^{-1} t$. By theorems \ref{renewal theorem}, \ref{Th X hat} and Lemma of page 151 of \cite{Billingsley} we obtain that $\hat{Y}_\lambda \circ \Phi_\lambda \Rightarrow \gamma W \circ \phi$. Moreover, $Y_\lambda = \hat{Y}_\lambda \circ \Phi_\lambda$ on the set $\{\lambda^{-1}N(\lambda T) \leq T\}$, the probability of which goes to one by again Theorem \ref{renewal theorem} and the fact that $\mu >1$. In conclusion, $Y_\lambda \Rightarrow \gamma W \circ \phi$ in $D[0,T]$, where $\gamma W \circ \phi$ is a Gaussian process with the same distribution of $\mu^{-1/2}\gamma W$. By applying Theorem \ref{th res goes to zero}, we can state that $X_\lambda \Rightarrow \mu^{-1/2}\gamma W$ in $D[0,T]$.  Now, from $\mathsf{P}\{X_\lambda \in C[0,T]\} \equiv \mathsf{P}\{\mu^{-1/2}\gamma W \in C[0,T]\} = 1$, by example 2.9 of \cite{Billingsley}, $X_\lambda \Rightarrow \mu^{-1/2}\gamma W$ in $C[0,T]$. From this, we can finally obtain that $X_\lambda \Rightarrow\mu^{-1/2}\gamma W$ in $C[0,+\infty)$.
\end{proof}
As a simple corollary of the previous theorem (easily proved by means of the continuous mapping theorem) we have a weak law of large numbers (in a functional settings) for the process $X(\lambda t)$.
\begin{corollary}\label{weak law integral semi-Markov}
Let $V = (V(t))_{t \geq 0}$ be a semi-Markov process with respect to $(\hat{V},S)$, a Markov renewal process on a finite state space $\mathcal{V}$, where the embedded Markov chain $\hat{V}$ is irreducible and aperiodic with invariant distribution $\pi$. Let us put $\mu=\mathsf{E}_\pi[S_1]$ and assume that $\mathsf{E}_\pi[S^2_1]<+\infty$. Then, $X = (X(t))_{t \geq 0}$ the integral of $V$,
\begin{equation*}
X(t) = \int_0^tV(s)ds
\end{equation*}
satisfies the following limit
\begin{equation}
\lim_{\lambda \to +\infty}\mathsf{P}\bigg\{\sup_{t \in [0,T]}\bigg|\frac{X(\lambda t)}{\lambda} - \frac{\mathsf{E}_\pi[\hat{V}_0S_1]}{\mathsf{E}_\pi[S_1]}\bigg|>\epsilon\bigg\} = 0,\ \;\; \epsilon>0.
\end{equation}
\end{corollary}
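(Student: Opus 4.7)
The plan is to deduce this corollary directly from Theorem \ref{main Theorem} by exploiting the algebraic identity
\begin{equation*}
\frac{X(\lambda t)}{\lambda}-\theta t \;=\; \frac{X_\lambda(t)}{\sqrt{\lambda}},
\end{equation*}
which is immediate from the definition of $X_\lambda$. With this identity at hand, the whole problem reduces to showing that $\sup_{t\in[0,T]}|X_\lambda(t)|/\sqrt{\lambda}\to 0$ in probability for every $T>0$, which is a straightforward consequence of the fact that the numerator is a weakly convergent (hence tight) family of real-valued random variables while the denominator diverges deterministically.

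First I would invoke Theorem \ref{main Theorem} to obtain $X_\lambda\Rightarrow \mu^{-1/2}\gamma W$ in $C[0,+\infty)$, and hence in $C[0,T]$ for every $T>0$. Next, I would apply the continuous mapping theorem to the functional $g:C[0,T]\to\mathbb{R}$ defined by $g(f)=\sup_{t\in[0,T]}|f(t)|$, which is continuous in the uniform topology. This gives
\begin{equation*}
\sup_{t\in[0,T]}|X_\lambda(t)|\;\Rightarrow\; \mu^{-1/2}\gamma\sup_{t\in[0,T]}|W(t)|,
\end{equation*}
and the right-hand side is an a.s.\ finite random variable, so in particular the family on the left-hand side is tight.

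The last step is a general fact: if $\{Z_\lambda\}$ is a tight family of real random variables and $a_\lambda\to+\infty$ deterministically, then $Z_\lambda/a_\lambda\to 0$ in probability, since for any $\epsilon>0$ and any $M$ with $\mathsf{P}\{|Z_\lambda|>M\}<\epsilon$ uniformly in $\lambda$, eventually $a_\lambda\epsilon>M$. Applying this with $Z_\lambda=\sup_{t\in[0,T]}|X_\lambda(t)|$ and $a_\lambda=\sqrt{\lambda}$ and then substituting the identity of the first paragraph yields the thesis.

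I expect no substantive obstacle here: the corollary is essentially a free byproduct of having already proved convergence at the fluctuation scale $\sqrt{\lambda}$ to a non-degenerate tight limit, and only the routine passage from tightness to the law-of-large-numbers rate $1/\lambda$ is required. The only small matter of care is that the weak limit in Theorem \ref{main Theorem} is formulated in $C[0,+\infty)$; restricting to $C[0,T]$ (equipped with the uniform metric, for which the restriction map is continuous) preserves weak convergence, so the continuous mapping theorem applies without modification.
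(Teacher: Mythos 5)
Your argument is correct and is essentially the one the paper intends (the paper offers no written proof beyond the remark that the corollary is "easily proved by means of the continuous mapping theorem" from Theorem \ref{main Theorem}): the identity $\lambda^{-1}X(\lambda t)-\theta t=\lambda^{-1/2}X_\lambda(t)$, the continuity of $f\mapsto\sup_{t\in[0,T]}|f(t)|$ on $C[0,T]$, and the tightness of a weakly convergent family combined with $\sqrt{\lambda}\to+\infty$ give exactly the stated uniform convergence in probability. One minor point: what you prove is the statement with centering $\theta t$, whereas the displayed equation in the corollary omits the factor $t$; this is evidently a typo in the paper (compare Remark \ref{remark deriva}, which quotes the corollary as yielding $\lambda^{-1}X(\lambda t)\Rightarrow\theta t$), and your version is the correct one.
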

\begin{remark}\label{remark deriva}
Under the assumptions of Theorem \ref{main Theorem} we consider an alternative normalization of the integral of a semi-Markov process which introduces a drift. Let $\bar{X}_\lambda = (\bar{X}_\lambda(t))_{t\geq 0}$ the random function of $C[0+\infty)$ defined as the integral of $\bar{V}_\lambda = (\bar{V}_\lambda(t))_{t\geq 0}$, $\bar{V}_\lambda(t) = \mathsf{E}[V(\lambda t)] + \sqrt{\lambda}(V(\lambda t)-\theta)$,
\begin{equation}
\bar{X}_\lambda(t) = \frac{1}{\lambda}\int_0^{\lambda t}\mathsf{E}[V(s)]ds + \frac{1}{\sqrt{\lambda}}\int_0^{\lambda t}(V(s)-\theta)ds =  \lambda^{-1}\mathsf{E}[X(\lambda t)] + \lambda^{-1/2}(X(\lambda t)-\theta \lambda t). 
\end{equation}
Corollary \ref{weak law integral semi-Markov} ensures that, for every $t \in [0,T]$, 
\begin{equation*}
\lambda^{-1}X(\lambda t) = \frac{1}{\lambda}\int_0^{\lambda t}V(s)ds\Rightarrow \theta t,
\end{equation*}
moreover, the uniform integrability of the sequence $\lambda^{-1}X(\lambda t)$ (which follows from the boundedness of $V$), entails
\begin{equation}\label{uniform int}
\lim_{\lambda \to +\infty}\lambda^{-1}\mathsf{E}[X(\lambda t)] = \lim_{\lambda \to +\infty}\frac{1}{\lambda}\int_0^{\lambda t}\mathsf{E}[V(s)]ds =\theta t
\end{equation}
for every $t \in [0,T]$. As a result of \eqref{uniform int}, Theorem \ref{main Theorem} implies that
\begin{equation}
\bar{X}_\lambda \Rightarrow \text{i}\theta + \mu^{-1/2}\gamma W
\end{equation}
where $\text{i}$ is the identity function, $\mu = \mathsf{E}_\pi[S_1]$ and $\gamma$ is given in \eqref{asympt var of X hat} or \eqref{asympt var of X hat 2}. 
We remark that \eqref{uniform int} can be also proved by means of \eqref{limit of the mean of semi-Markov} and the dominated convergence theorem (after the change of variable $s' = s/\lambda$).
\end{remark}
\section{Weak convergence of the integral of an alternating renewal process}\label{sezioneConvergenzaIntegralRinnovoAlternato}
Let $\mathcal{V} = \{v_1,...,v_m\}$ and define $v_{rm+i} \coloneqq v_i$ for $r \in \mathbb{N}_0$ and $i=1,...,m$. An alternating renewal process is a particular case of a semi-Markov process in which a particle deterministically moves from state $v_k$ to state $v_{k+1}$ and the cycle restarts after $m$ steps. This means that the Markov renewal-kernel is of the form
\begin{align}
&\mathsf{P}\{\hat{V}_{k+1} = v_{j},S_{k+1}-S_{k}\leq t|\hat{V}_{k} = v_i\}
=Q_{v_iv_j}(t)=\mathsf{1}_{\{v_{i+1}\}}(v_j)F_{v_i v_{j}}(t)
\end{align}
for every $i,j=1,...,m$, $k\in \mathbb{N}_0$, and $t \geq 0$. From the definition, it follows that the embedded Markov chain $\hat{V}$ is periodic with period equals to $m$ and the transition probabilities are given by $p_{v_iv_j} = \mathsf{1}_{\{v_{i+1}\}}(v_j)$, while the invariant distribution is given by $\pi_{v_j} = m^{-1}$, $j=1,...,m$. Moreover it holds that
\begin{equation*}
\mathsf{P}\{S_{k+1}-S_{k}\leq t|\hat{V}_{k} = v_i,\hat{V}_{k+1} = v_{i+1}\} = F_{v_iv_{i+1}}(t) = \mathsf{P}\{S_{k+1}-S_{k}\leq t|\hat{V}_{k} = v_i\} = Q_{v_i\cdot}(t).
\end{equation*}
We note that, if $\mathsf{P}\{\hat{V}_0 = v_1\}=1$, then $\{\hat{V}_k\}_{k\geq 0}$ becomes a deterministic sequence. Indeed, $\mathsf{P}\{V_k = v_i\} = 1$ if there exists $r \in \mathbb{N}_0$ such that $k = rm+i-1$, $i=1,...,m$ and, by letting with $\xi_k = S_k - S_{k-1}$, the random variables forming the sequence $\{\xi_k\}_{k \geq 1}$ are independent and satisfies $\xi_{rm+i} =_d \xi_i$. 

The alternating renewal process $V = (V(t))_{t \geq 0}$ based on $\{\hat{V}_k,S_k\}_{k \geq 0}$ can be written down as
\begin{equation}V(t) = 
\sum_{k=1}^m\hat{V}_{k-1}\sum_{r\geq 0}\mathsf{1}_{\{rm+k-1\}}(N(t)),
\end{equation}
where $N$ is the counting process related to $S$ and defined in \eqref{counting process}.

Let us denote by $\mu_i = \mathsf{E}[\xi_1|\hat{V}_0 = v_i]$ and $\sigma^2_i = \mathsf{V}[\xi_1|\hat{V}_0 = v_i]$. Then,
\begin{equation}\label{theta alternating renewal process}
\theta = \frac{\mathsf{E}_\pi[\hat{V}_0S_1]}{\mathsf{E}_\pi[S_1]}=\frac{\sum_{i=1}^mv_i\mu_i}{\sum_{i=1}^m\mu_i},
\end{equation}
and in this case the integral of $V_{\lambda}(t) = \sqrt{\lambda}(V(\lambda t)-\theta)$ can be written down as
\begin{align}
&X_{\lambda}(t) = \sum_{k=1}^m(\hat{V}_{k-1}-\theta)\int_0^t\sum_{r\geq 0}\mathsf{1}_{\{rm+k-1\}}(N(\lambda s))ds
\end{align}
or also, as it is done in \eqref{Decomposition fvrm},
\begin{align}
&X_{\lambda}(t) = \frac{1}{\sqrt{\lambda}}\sum_{k = 1}^{N(\lambda t)}(\hat{V}_{k-1}-\theta)\xi_k + \frac{1}{\sqrt{\lambda}}(\hat{V}_{N(\lambda t)}-\theta)\biggl(\lambda t-\sum_{k = 1}^{N(\lambda t)}\xi_k\biggr).
\end{align}
Let us denote with \begin{align}\label{formulas}
&\mu = \frac{1}{m}\sum_{i=1}^m\mu_i,&\gamma^2 =\frac{1}{m}\sum_{i=1}^m\sigma^2_i\biggl(v_i-\frac{\sum_{i=1}^mv_i\mu_i}{\sum_{i=1}^m\mu_i}\biggr)^2.
\end{align}
Since the alternating renewal process is periodic, we cannot direct apply Theorem \ref{main Theorem}. The next statement provides the conditions for the weak convergence in this new framework.
\begin{theorem}
Let $V = (V(t))_{t \geq 0}$ be an alternating renewal process with state space $\mathcal{V}$ and assume that $\mathsf{E}[S_1^2|\hat{V}_0 = v]<+\infty$ for every $v \in \mathcal{V}$. Then, the process $X_\lambda = (X_\lambda(t))_{t \geq 0}$ defined by
\begin{equation}
X_\lambda(t) =\frac{1}{\sqrt{\lambda}} \int_0^{\lambda t}(V(s)-\theta)ds,
\end{equation}
where $\theta$ is defined in \eqref{theta alternating renewal process},
satisfies the following weak limit in $C[0,+\infty)$  
\begin{equation}
X_\lambda \Rightarrow\mu^{-1/2}\gamma W
\end{equation}
where $\mu$ and $\gamma$ are provided in \eqref{formulas}.
\end{theorem}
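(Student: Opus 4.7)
The obstacle is that this theorem is a periodic-chain version of Theorem \ref{main Theorem}, which required aperiodicity. My plan is to inspect the proof of Theorem \ref{main Theorem} and replace only the steps that genuinely use aperiodicity. Tracing through, aperiodicity enters exclusively via the $\varphi$-mixing route in Theorem \ref{Th X hat}; the alternative regenerative route in the same theorem (together with Theorem \ref{renewal theorem}, Theorem \ref{th res goes to zero} and Lemma \ref{Fundamental lemma}) only needs the Markov chain to be irreducible. Since the deterministic cycle $v_1\to v_2\to\cdots\to v_m\to v_1$ is irreducible on $\mathcal{V}$, those ingredients remain available.

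Concretely, I would fix $v_0=v_1$ and apply Theorem \ref{Regenerative property} to conclude that $\{\eta_k\}_{k\ge 1}$, with $\eta_k=(\hat{V}_{k-1}-\theta)\xi_k$, is delayed regenerative with regeneration epochs $\{\tau_r\}_{r\ge 0}$ defined by \eqref{successive passage times}. Because the chain is deterministic once $v_1$ is entered, $\tau_{r+1}-\tau_r=m$ almost surely for every $r\ge 1$ (and $\tau_1\le m$), so the moment hypotheses in Theorem \ref{Clt regenerative processes} are trivially satisfied. To identify the asymptotic variance, I would use that, conditionally on $\hat{V}_0=v_1$, the whole chain $\hat{V}$ is deterministic, whence by \eqref{Xi cond indep MC} the first-cycle inter-arrivals $\xi_1,\dots,\xi_m$ are mutually independent with $\mathsf{E}[\xi_i]=\mu_i$ and $\mathsf{V}[\xi_i]=\sigma_i^2$. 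It then follows from the definition \eqref{theta alternating renewal process} of $\theta$ that
\begin{equation*}
\mathsf{E}\Bigl[\sum_{k=1}^{\tau_1}\eta_k\,\Bigm|\,\hat{V}_0=v_1\Bigr]=\sum_{i=1}^m(v_i-\theta)\mu_i=0,
\end{equation*}
while
\begin{equation*}
\mathsf{E}\Bigl[\Bigl(\sum_{k=1}^{\tau_1}\eta_k\Bigr)^{2}\,\Bigm|\,\hat{V}_0=v_1\Bigr]=\sum_{i=1}^m(v_i-\theta)^2\sigma_i^2,
\end{equation*}
which, divided by $\mathsf{E}[\tau_2-\tau_1]=m$, coincides with the $\gamma^2$ of \eqref{formulas}. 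Theorem \ref{Clt regenerative processes} then yields $\hat{Y}_\lambda\Rightarrow \gamma W$ in $D[0,T]$.

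From here the conclusion follows by reproducing the tail of the proof of Theorem \ref{main Theorem}. Theorem \ref{renewal theorem}, whose hypotheses hold in the present periodic setting, provides $\Phi_\lambda\Rightarrow\phi$ with $\phi(t)=t/\mu$ and $\mu=m^{-1}\sum_{i=1}^m\mu_i$; composition (via the lemma on page 151 of \cite{Billingsley}) then gives $Y_\lambda=\hat{Y}_\lambda\circ\Phi_\lambda\Rightarrow \gamma W\circ\phi$ in $D[0,T]$, and $\gamma W\circ\phi$ has the distribution of $\mu^{-1/2}\gamma W$. Theorem \ref{th res goes to zero} kills the residual-life error in \eqref{Decomposition fvrm}, upgrading convergence from $Y_\lambda$ to $X_\lambda$, and the passage from $D[0,T]$ to $C[0,+\infty)$ is handled exactly as in Theorem \ref{main Theorem}.

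The main conceptual obstacle is establishing, at the start, that aperiodicity is not really needed: one must verify that the regenerative argument inside Theorem \ref{Th X hat} and the renewal-theoretic results of Section \ref{SezioneProprietaAsintoticheMarkovRinnovo} (Theorem \ref{renewal theorem} and Theorem \ref{th res goes to zero}) rest only on irreducibility. Once that is done, the remaining work is a one-cycle mean-and-variance calculation, which is elementary because the deterministic nature of $\hat{V}$ within a cycle makes the $\xi_i$ independent.
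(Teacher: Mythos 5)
Your proposal is correct and follows essentially the same route as the paper: both replace the $\varphi$-mixing argument (which needs aperiodicity) by the regenerative route through Theorem \ref{Clt regenerative processes} with regeneration at returns to $v_1$, compute the one-cycle mean and second moment using the deterministic cycle structure and the conditional independence of the $\xi_i$ to get $\gamma^2=m^{-1}\sum_i\sigma_i^2(v_i-\theta)^2$, and then finish with Theorem \ref{renewal theorem}, the residual-life estimate and the $D[0,T]\to C[0,+\infty)$ passage exactly as in Theorem \ref{main Theorem}. The only cosmetic difference is that you compose $\hat{Y}_\lambda$ with the time-change $\Phi_\lambda$ as in the main theorem, whereas the paper invokes Lemma \ref{Fundamental lemma} directly; the two are interchangeable here.
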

\begin{proof}
From Theorem \ref{Regenerative property}, the sequence $\{(V_{k-1},\xi_k)\}_{k\geq 1}$ forms a delayed regenerative process with regeneration epochs $\tau_n = \inf\{k>\tau_{n-1}: V_k = v_1\}$, which satisfy $\tau_{n+1}-\tau_{n} = m$ for $n \in \mathbb{N}$. By \eqref{delayed reg2}, for every $n \in \mathbb{N}$
\begin{align*}
\frac{\mathsf{E}\biggl[\sum_{k=\tau_{n}+1}^{\tau_{n+1}}V_{k-1}\xi_k\biggr]}{\mathsf{E}[\tau_{n+1}-\tau_n]} = \frac{1}{m}\mathsf{E}\biggl[\sum_{k=1}^{\tau_1}V_{k-1}\xi_k\bigg|V_0 = v_1\biggr]=\frac{1}{m}\sum_{k=1}^{m}v_k\mathsf{E}[\xi_k|V_0 = v_1] =\frac{1}{m}\sum_{i=1}^{m}v_i\mu_i,
\end{align*}
from which we have
\begin{align}\label{mu 0}
\mathsf{E}\biggl[\sum_{k=\tau_{1}+1}^{\tau_{2}}(V_{k-1}-\theta)\xi_k\biggr] = \sum_{i=1}^{m}v_i\mu_i-\frac{\sum_{i=1}^mv_i\mu_i}{\sum_{i=1}^m\mu_i}\sum_{i=1}^m\mu_i = 0.
\end{align}
Moreover,
\begin{align*}
\mathsf{E}&\biggl[\biggl(\sum_{k=\tau_{n}+1}^{\tau_{n+1}}|V_{k-1}-\theta|\xi_k\biggr)^2\biggr] = \mathsf{E}\biggl[\biggl(\sum_{k=1}^{\tau_{1}}|V_{k-1}-\theta|\xi_k\biggr)^2\bigg|\hat{V}_0 = v_1\biggr] \\&\leq\max_{i=1,...,m}(v_i-\theta)^2 \mathsf{E}\biggl[\biggl(\sum_{k=1}^{m}\xi_k\biggl)^2\bigg|V_0 = v_1\biggr] = \max_{i=1,...,m}(v_i-\theta)^2\bigg[\sum_{i=1}^{m}\sigma^2_i+\bigg(\sum_{i=1}^{m}\mu_i\bigg)^2\bigg] <+\infty
\end{align*}
Therefore, from Theorem \ref{Clt regenerative processes} it follows that the process $\hat{Y}_\lambda$ defined as
\begin{equation*}
\hat{Y}_\lambda(t) = \frac{1}{\sqrt{\lambda}}\sum_{k=1}^{\lfloor \lambda t\rfloor}(V_{k-1}-\theta)\xi_k,\ \;\; t \in [0,T]
\end{equation*}
satisfies $\hat{Y}_\lambda \Rightarrow \gamma W$ on $D[0,T]$, where, from \eqref{mu sigma reg}, and by keeping in mind \eqref{mu 0},
\begin{align*}
\gamma^2 &= \big(\mathsf{E}[\tau_2-\tau_1]\big)^{-1}\mathsf{E}\biggl[\biggl(\sum_{k=\tau_{1}+1}^{\tau_{2}}(V_{k-1}-\theta)\xi_k\biggr)^2\biggr] = \frac{1}{m}\mathsf{E}\biggl[\biggl(\sum_{k=1}^{\tau_{1}}(V_{k-1}-\theta)\xi_k\biggr)^2\bigg|\hat{V}_0 = v_1\biggr]\\
&= \frac{1}{m}\sum_{i=1}^m\sigma^2_i\biggl(v_i-\frac{\sum_{i=1}^mv_i\mu_i}{\sum_{i=1}^m\mu_i}\biggr)^2.
\end{align*}
Now, Theorem \ref{renewal theorem} implies that 
\begin{equation*}
\sup_{t \in [0,T]}\biggl|\frac{N(\lambda t)}{\lambda}-\frac{tm}{\sum_{i=1}^m\mu_i}\biggr| \Rightarrow 0.
\end{equation*}
Thus, we can use Lemma \ref{Fundamental lemma} together with Theorem \ref{Residual life}, to conclude that
\begin{equation*}
X_\lambda  \Rightarrow \mu^{-1/2}\gamma W
\end{equation*}
on $D[0,T]$. By using the same arguments of proof of Theorem \ref{main Theorem} we obtain the thesis.
\end{proof}
\begin{remark}\label{remark deriva alternatingrenewalprocess}
If, according to Remark \ref{remark deriva}, we consider the alternative normalization 
\begin{align}
\bar{X}_\lambda(t) &= \frac{1}{\lambda}\int_0^{\lambda t}\mathsf{E}[V(s)]ds + \frac{1}{\sqrt{\lambda}}\int_0^{\lambda t}(V(s)-\theta)ds=\lambda^{-1}\mathsf{E}[X(\lambda t)] + \lambda^{-1/2}(X(\lambda t)-\theta \lambda t)\nonumber\\ 
&=\sum_{k=1}^m\int_0^t\sum_{r\geq 0}\mathsf{E}[\hat{V}_{k-1}\mathsf{1}_{\{rm+k-1\}}(N(\lambda s))]ds+\sum_{k=1}^m(\hat{V}_{k-1}-\theta)\int_0^t\sum_{r\geq 0}\mathsf{1}_{\{rm+k-1\}}(N(\lambda s))ds.
\end{align}
The previous theorem ensures that, for every $t \in [0,T]$, 
\begin{equation*}
\bar{X}_\lambda(t) -\lambda^{-1}\mathsf{E}[X(\lambda t)] = \frac{1}{\sqrt{\lambda}}\int_0^{\lambda t}(V(s)-\theta)ds \Rightarrow \mu^{-1/2}\gamma W(t)
\end{equation*}
which implies
\begin{equation*}
\frac{X(\lambda t)}{\lambda}=\frac{1}{\lambda}\int_0^{\lambda t}V(s)ds\Rightarrow \theta t.
\end{equation*}
Then, by uniform integrability we obtain
\begin{equation*}\lim_{\lambda \to +\infty}\frac{1}{\lambda}\mathsf{E}[X(\lambda t)] = \theta t
\end{equation*}
and hence
\begin{equation}
\bar{X}_\lambda \Rightarrow \text{i}\theta + \mu^{-1/2}\gamma W
\end{equation}
where $\text{i}$ is the identity function.
\end{remark}
\subsection{Application to the generalized telegraph process}\label{sezioneApplicazioni}
Here we show an application of the previous results to the asymmetric telegraph process (see \cite{BNO2001, C2022, SZ2004}). 
\\Let $(\hat{V},S)$ be an alternating renewal process where $\mathcal{V} = \{v_1,v_2\}$, with $v_1,v_2 \in \mathbb{R}$, and  
\begin{equation}\label{alternating poisson process}
Q_{v_iv_{j}}(t) = \mathsf{1}_{\{v_{i+1}\}}(v_j)(1-e^{-\lambda_i t}).
\end{equation}
Hence,
\begin{align*}
&\mathsf{P}\{V_k = v_i,S_k-S_{k-1}\leq t|V_{k-1} = v_{i-1}\} = \mathsf{P}\{S_k-S_{k-1}\leq t|V_{k-1} = v_{i-1}\} = \begin{cases}
1-e^{-\lambda_1t}, &v_{i-1}=v_1\\
1-e^{-\lambda_2t}, &v_{i-1}=v_2.
\end{cases}\end{align*}
If we set $N(t) = \max\{k\in \mathbb{N}_0: S_k \leq t\}$, then $N = (N(t))_{t\geq0}$ is the alternating Poisson process. It can be proved that
\begin{align*}&\mathsf{P}\{N(t) = n|V(0) = v_1\} = \begin{cases}
(\lambda_1t)^{k}(\lambda_2t)^{k}e^{-\lambda_1 t}W_{k,k+1}\bigl(t(\lambda_1-\lambda_2)\bigr), &n=2k\\
(\lambda_1t)^{k+1}(\lambda_2t)^{k}e^{-\lambda_1 t}W_{k+1,k+1}\bigl(t(\lambda_1-\lambda_2)\bigr), &n=2k+1
\end{cases}\end{align*}
where $W_{\alpha,\beta}(x) = (\Gamma(\alpha)\Gamma(\beta))^{-1}\int_{0}^1t^{\alpha-1}(1-t)^{\beta-1}e^{-x t}dt$.

Let $V(t) = V_{N(t)}$, from \eqref{alternating poisson process}, it follows that $V$ is a Markov process with generator 
\begin{equation*}
A=\begin{pmatrix}
-\lambda_1 & \lambda_1 \\
\lambda_2 & -\lambda_2 
\end{pmatrix},
\end{equation*}
then, Kolmogorov's forward equation can be used to obtain that
\begin{align}&\mathsf{P}\{V(t) = v|V(0) = v_1\} =\begin{cases}
\frac{\lambda_2}{\lambda_1+\lambda_2}+\frac{\lambda_1}{\lambda_1+\lambda_2}e^{-(\lambda_1+\lambda_2)t}, &v=v_1\\
\frac{\lambda_1}{\lambda_1+\lambda_2}-\frac{\lambda_1}{\lambda_1+\lambda_2}e^{-(\lambda_1+\lambda_2)t}, &v=v_2
\end{cases}\end{align}
and
\begin{align}&\mathsf{P}\{V(t) = v|V(0) = v_2\} =\begin{cases}
\frac{\lambda_2}{\lambda_1+\lambda_2}-\frac{\lambda_2}{\lambda_1+\lambda_2}e^{-(\lambda_1+\lambda_2)t}, &v=v_1\\
\frac{\lambda_1}{\lambda_1+\lambda_2}+\frac{\lambda_2}{\lambda_1+\lambda_2}e^{-(\lambda_1+\lambda_2)t}, &v=v_2.
\end{cases}\end{align}
If $(p,1-p)$ is the initial distribution of $V(0)$, then the law of $V$ takes the form
\begin{align}&\mathsf{P}\{V(t) = v\} =\begin{cases}
\frac{\lambda_2}{\lambda_1+\lambda_2}+\frac{p\lambda_1-(1-p)\lambda_2}{\lambda_1+\lambda_2}e^{-(\lambda_1+\lambda_2)t}, &v=v_1\\
\frac{\lambda_1}{\lambda_1+\lambda_2}-\frac{p\lambda_1-(1-p)\lambda_2}{\lambda_1+\lambda_2}e^{-(\lambda_1+\lambda_2)t}, &v=v_2
\end{cases}\end{align}
from which we obtain
\begin{equation}
\mathsf{E}[V(t)] = \frac{v_1\lambda_2+v_2\lambda_1}{\lambda_1+\lambda_2}+\frac{(p\lambda_1-(1-p)\lambda_2)(v_1-v_2)}{\lambda_1+\lambda_2}e^{-(\lambda_1+\lambda_2)t}
\end{equation}
and
\begin{equation}
\theta = \frac{\mathsf{E}_\pi[\xi_1V_0]}{\mathsf{E}_\pi[\xi_1]} = \frac{v_1\lambda_2+v_2\lambda_1}{\lambda_1+\lambda_2}.
\end{equation}
Let $X$ be the integral of $V$, then the density of $X(t)$
satisfies the following hyperbolic equation
\begin{align}\label{hyperbolic pde}
&\frac{\partial^2u}{\partial t^2} + (\lambda_1+\lambda_2)\frac{\partial u}{\partial t} +(v_1+v_2)\frac{\partial^2u}{\partial x\partial t} +(v_1\lambda_2+v_2\lambda_1)\frac{\partial u}{\partial x}= -v_1v_2\frac{\partial^2u}{\partial x^2}.
\end{align}
In this case, $X$ can be interpreted as the trajectory of a particle moving with constant velocities $v_1,v_2$ and subject to reversals of direction at the jump times of the alternating Poisson process of rates $\lambda_1,\lambda_2$. Therefore the mean length of each displacement is $1/\lambda_1$ and $1/\lambda_2$ according to the current speed.

Then $\bar{X}_\lambda$, the integral of $V$ under the reparametrization of Remark \ref{remark deriva alternatingrenewalprocess}, becomes
\begin{align}
&\bar{X}_\lambda(t) =\frac{(p\lambda_1-(1-p)\lambda_2)(v_1-v_2)}{\lambda(\lambda_1+\lambda_2)^2}(1-e^{-(\lambda_1+\lambda_2)\lambda t})\!+\!\int_0^{t}\sqrt{\lambda}\biggl(\!V(\lambda s)-\frac{v_1\lambda_2+v_2\lambda_1}{\lambda_1+\lambda_2}\bigl(1-\frac{1}{\sqrt{\lambda}}\bigr)\!\!\biggr)ds.
\end{align}
If we define the process $T_\lambda = (T_\lambda(t))_{t\geq0}$ by
\begin{equation}
T_\lambda(t) = \int_0^{t}\sqrt{\lambda}\biggl(V(\lambda s)-\frac{v_1\lambda_2+v_2\lambda_1}{\lambda_1+\lambda_2}\bigl(1-\frac{1}{\sqrt{\lambda}}\bigr)\biggr)ds,
\end{equation}
then $\bar{X}_\lambda -T_\lambda \Rightarrow 0$, and so $\bar{X}_\lambda$ and $T_\lambda$ have the same weak limit. Now, $T_\lambda$ is an asymmetric telegraph process with rates $\lambda\lambda_1$, $\lambda\lambda_2$ and velocities 
\begin{align*}
&\sqrt{\lambda}(v_1-\theta)+\theta = \frac{\sqrt{\lambda}\lambda_1(v_1-v_2)+v_1\lambda_2+v_2\lambda_1}{\lambda_1+\lambda_2}, &\sqrt{\lambda}(v_2-\theta)+\theta = \frac{\sqrt{\lambda}\lambda_2(v_2-v_1)+v_1\lambda_2+v_2\lambda_1}{\lambda_1+\lambda_2}.
\end{align*}
We note that, in the limit, the two velocities become of opposite signs. Thus, according to Remark \ref{remark deriva alternatingrenewalprocess}, we can conclude that an asymmetric telegraph process $T_\lambda$ with those parameters converges weakly to Brownian motion with drift and scaling parameter respectively given by
\begin{align*}
&\theta = \frac{v_1\lambda_2+v_2\lambda_1}{\lambda_1+\lambda_2} ,
&\frac{\gamma^2}{\mu} = 
2\frac{\lambda_1\lambda_2(v_1-v_2)^2}{(\lambda_1+\lambda_2)^{3}}.
\end{align*}
Moreover, from \eqref{hyperbolic pde} it follows that the density of $T_\lambda$ is a solution of the following partial differential equation
\begin{align}\label{equazioneTelegrafoGeneralizzato}
\frac{1}{\lambda(\lambda_1+\lambda_2)}&\frac{\partial^2u}{\partial t^2} + \frac{\partial u}{\partial t} +\biggl(\frac{2(v_1\lambda_2+v_2\lambda_1)}{\lambda(\lambda_1+\lambda_2)^2}+\frac{(v_2-v_1)(\lambda_1-\lambda_2)}{\sqrt{\lambda}(\lambda_1+\lambda_2)^2}\biggr)\frac{\partial^2u}{\partial x\partial t}+\frac{v_1\lambda_2+v_2\lambda_1}{\lambda_1+\lambda_2}\frac{\partial u}{\partial x}\nonumber\\
&=\biggl(\frac{\lambda_1\lambda_2(v_1-v_2)^2}{(\lambda_1+\lambda_2)^{3}}-\frac{(v_1\lambda_2+v_2\lambda_1)(\lambda_1-\lambda_2)(v_1-v_2)}{\sqrt{\lambda}(\lambda_1+\lambda_2)^2}-\frac{(v_1\lambda_2+v_2\lambda_1)^{2}}{\lambda(\lambda_1+\lambda_2)^{3}}\biggr)\frac{\partial^2 u}{\partial x^2},
\end{align}
and, as $\lambda\to+\infty$, we obtain 
\begin{equation}
\frac{\partial u}{\partial t}+\frac{v_1\lambda_2+v_2\lambda_1}{\lambda_1+\lambda_2}\frac{\partial u}{\partial x}=\frac{\lambda_1\lambda_2(v_1-v_2)^2}{(\lambda_1+\lambda_2)^{3}}\frac{\partial^2u}{\partial x^2},
\end{equation}
which is the equation governing the law of Brownian motion with drift $\theta$ and scaling parameter $\mu^{-1/2}\gamma$. 

In the case in which $v_1=-v_2 = v_0$ and $\lambda_1 = \lambda_2 = \lambda_0$, then $N$ is a Poisson process of rate $\lambda_0$, and $V(t) = V(0)(-1)^{N(t)}$, where $V(0)$ has distribution $(1-p,p)$. It follows that
\begin{align}
&\mathsf{P}\{V(t) = v\} = 
\begin{cases}
\frac{1}{2}+(p-\frac{1}{2})e^{-2\lambda_0t}, &v=v_0\\
\frac{1}{2}-(p-\frac{1}{2})e^{-2\lambda_0t}, &v=-v_0
\end{cases}
\end{align}
and so $\mathsf{E}[V(t)] = v_0(2p-1)e^{-2\lambda_0t}$ and $\theta = 0$. The integral of $\sqrt{\lambda}V(\lambda t)$, taking the form
\begin{equation}
X_\lambda(t) = \int_0^{t}\sqrt{\lambda}V(0)(-1)^{N(\lambda s)}ds,
\end{equation}
is a telegraph process with modulus of velocity $\sqrt{\lambda}v_0$, rate of changes of direction $\lambda\lambda_0$ and satisfies $T_\lambda \Rightarrow v_0\lambda_0^{-1/2}W$. Furthermore, the density of $T_\lambda(t)$
is a solution of the telegraph equation
\begin{equation}
\frac{1}{2\lambda\lambda_0}\frac{\partial^2 u}{\partial t^2}+\frac{\partial u}{\partial t} = \frac{v_0^2}{2\lambda_0}\frac{\partial^2u}{\partial x^2}
\end{equation}
and by taking the limit as $\lambda \to +\infty$ the heat equation emerges
\begin{equation}
\frac{\partial u}{\partial t}=\frac{v_0^2}{2\lambda_0}\frac{\partial^2u}{\partial x^2}.
\end{equation}


\begin{thebibliography}{9}

\bibitem{BNO2001}
Beghin, L., Nieddu, L., Orsingher, E. (2001), Probabilistic analysis of the telegrapher's process with drift by means of relativistic transformations, Journal of Applied Mathematics and Stochastic Analysis 14, 11--25.

\bibitem{Billingsley} 
Billingsley, P., \textit{Convergence of probability measures}, Second edition (1999), John Wiley \& Sons, Inc., New York.
\bibitem{Pm Billingsley}
Billingsley, P., \textit{Probability and Measure}, Third edition (1995), John Wiley \& Sons, Inc., New York.

\bibitem{Cinlar} 
Cinlar, E., \textit{Introduction to Stochastic Processes} (1975), Prentice Hall, Englewood Cliffs, NJ.

\bibitem{C2022}
Cinque, F. (2022), A note on the conditional probabilities of the telegraph process, Statistics and Probability Letters 185, 109431.

\bibitem{C2023}
Cinque, F. (2023), Reflection principle for finite-velocity random motions, Journal of Applied Probability 60(2), 479--492. 

\bibitem{CC2024}
Cinque, F., Cintoli, M. (2024), Multidimensional random motions with a natural number of finite velocities, Adv. Appl. Prob. 56, 1033--1063.

\bibitem{CO2021}
Cinque, F., Orsingher, E. (2021), On the exact distribution of the maximum of the asymmetric telegraph process, Stochastic Processes and their Applications 142, 601--633.

\bibitem{DbMS2021}
De Bruyne, B., Majumdar, S.N., Schehr, G. (2021), Survival probability of a run-and-tumble particle in the presence of a drift, J. Stat. Mech., 043211.

\bibitem{Dg2010}
De Gregorio, A. (2010). Stochastic velocity motions and processes with random time, Advances in Applied Probability 42, 1028--1056.

\bibitem{Dc2001}
Di Crescenzo, A. (2001), On Random Motions with Velocities Alternating at Erlang-Distributed Random Times, Adv. Appl. Prob. 33, 690--701.

\bibitem{EK1986}Ethier, S.N., Kurtz, T.G. (1986). \textit{Markov Processes: Characterization and Convergence}, in: Wiley Series in Probability and Mathematical Statistics, Wiley, New York.

\bibitem{Gosh} Ghosh, A. P., Rastegar, R., Roitershtein, A. (2014) On a directionally reinforced random walk, \textit{Proceedings of the American Mathematical Society}, 142(\textbf{9}), 3269–3283.


\bibitem{G1951}
Goldstein, S. (1951), On diffiusion by discontinuous movements and the telegraph equation, Quart. J. Mech. Appl. Math. 4, 129--156.

\bibitem{HS1998}
Horv\'{a}t, L., Shao, Q. (1998), Limit distributions of directionally reinforced random walks, Adv. in Math. 134, 367--383.

\bibitem{LR2014}
Lopez, O., Ratanov, N. (2014), On the asymmetric telegraph processes, J. Appl. Probab. 51(2), 569--589.

\bibitem{MMW1996}
Mauldin, R.D., Monticino, M., von Weizsacker, H. (1996), Directionally reinforced random walks, Adv. in Math. 117, 239--252.

\bibitem{MS2014}
Meerschaert, M., Skarta, P. (2014), Semi-Markov approach to continuous time random walk limit processes, Ann. Probab. 42(4), 1699--1723.

\bibitem{MADlB2012}
Mertens, K., Angelani, L., Di Leonardo, R., Bocquet, L. (2012), Probability distributions for the run-and-tumble bacterial dynamics: An analogy to the Lorentz model, The European Physical Journal 35, 84.

\bibitem{PO2025}
Pedicone A., Orsingher E. (2025), Preprint, arXiv:??.

\bibitem{O1990}
Orsingher, E. (1990), Probability law, flow function, maximum distribution of wave-governed random motions and their connections with Kirchoff's laws, Stochastic Processes and their Applications 34(1), 49--66.

\bibitem{Sigman Wolff}
Sigman K., Wolff R. W. (1993), A Review of Regenerative Processes. SIAM Review, 35(2), 269--288.

\bibitem{SZ2004}
Stadje, W., Zack, S. (2004), Telegraph processes with random velocities, J. Appl. Probab. 41(3), 665--678.

\bibitem{Walters} Walters P., \textit{An introduction to ergodic theory} (1982), Springer, Berlin.
\bibitem{Zauderer} Zauderer E., \textit{Partial Differential Equations of Applied Mathematics}, Second edition (1989), John Wiley \& Sons, Inc., New York.

\end{thebibliography}
\end{document}